\newcommand{\mb}[1]{\mathbb{{#1}}}
\newcommand{\mc}[1]{\mathcal{{#1}}}
\newcommand{\dd}{\mathrm{d}}
\newcommand{\1}{\mathbf{1}}
\DeclareMathOperator{\tr}{tr}
\DeclareMathOperator{\supp}{supp}
\newcommand{\R}{\mathbb{R}}
\newcommand{\E}{\mathbb{E}}
\DeclareMathOperator{\var}{Var}
\newcommand{\abs}[1]{\left\vert #1 \right\vert}
\newcommand{\scal}[2]{\left\langle #1, #2\right\rangle}
\theoremstyle{definition}
\newtheorem{prob}{Problem}
\newtheorem{cor}{Corollary}
\newtheorem{thm}{Theorem}
\newtheorem{lemma}{Lemma}
\theoremstyle{definition}
\newtheorem{definition}{Definition}
\theoremstyle{remark}
\newtheorem{rem}{Remark}
\newtheorem{question}{Question} 
\begin{document}

\newgeometry{tmargin=2.5cm, bmargin=2.5cm, lmargin=2.5cm, rmargin=2.5cm}

\title[Variance-entropy comparison for Gaussian quadratic forms  ]{Sharp variance-entropy comparison for nonnegative Gaussian quadratic forms}

\author{Maciej Bartczak}
\address{University of Warsaw}
\email{mb384493@students.mimuw.edu.pl}

\author{Piotr Nayar}
\thanks{M.B. was supported by the National Science 
Centre, Poland, grant 2015/18/A/ST1/00553. P.N. and S.Z. were supported by the National Science Centre, Poland, grant 2018/31/D/ST1/01355}
\address{University of Warsaw}
\email{nayar@mimuw.edu.pl}

\author{Szymon Zwara}
\address{University of Warsaw}
\email{szymon.zwara@students.mimuw.edu.pl}

\begin{abstract}
In this article we study weighted sums of $n$ i.i.d.  Gamma($\alpha$) random variables with nonnegative weights. We show that for $n \geq 1/\alpha$ the sum with equal coefficients maximizes  differential entropy when variance is fixed. As a consequence, we prove that among nonnegative quadratic forms in $n$ independent standard Gaussian random variables, a diagonal form with equal coefficients maximizes differential entropy, under a fixed variance. 
This provides a sharp lower bound for the relative entropy between a nonnegative quadratic form and a Gaussian random variable. Bounds on capacities of  transmission channels subject to $n$ independent additive gamma noises are also derived. 
\end{abstract}

\maketitle

{\footnotesize
\noindent {\em 2010 Mathematics Subject Classification.} Primary 60E15; Secondary 94A17.

\noindent {\em Key words. Entropy, Gaussian chaos, quadratic form, gamma distribution, exponential random variables, channel capacity. } 
}
\bigskip

\section{Introduction}\label{sec:intro}

For a random variable $X$ with density $f$ its Shannon differential entropy is defined by the formula $h(X)=-\int f \ln f$, provided that this integral converges, with the convention that $0 \ln 0 = 0$. It is a classical fact that $h(X) \leq h(G)$ if $X$ is a random variable with finite second moment and $G$ is a Gaussian random variable satisfying $\var(X)=\var(G)$. Thus, a Gaussian random variable maximizes entropy under a fixed variance (note that even if $X$ has finite second moment, the integral in the definition of $h(X)$ may diverge to $-\infty$, but never to $+\infty$). This statement can be rewritten in the form of a variance-entropy comparison as follows: for any random variable $X$ with finite second moment one has $h(X) \leq \frac12 \ln\left( 2\pi e \var(X) \right)$, see e.g. Theorem 8.6.5 in \cite{CT06}. Due to the Pinsker-Csisz\'ar-Kullback inequality, see \cite{P64,C62,K67}, one has $d_{\textrm{TV}}(X, G) \leq 2(h(G)-h(X))$, whenever $G$ is a Gaussian random variable with the same mean and variance as the random variable $X$. Here $d_{\textrm{TV}}$ stands for the total variation distance. Hence, the quantity $h(G)-h(X)$ is a strong measure of closeness to Gaussianity. In fact, we have $D_{KL}(X \| G) = h(G)-h(X)$, where $D_{KL}$ is the so-called Kullback–Leibler divergence (or relative entropy).

In the celebrated article \cite{ABBN04} Artstein, Ball, Barthe and Naor showed that if $X_1, X_2, \ldots$ is a sequence of i.i.d. random variables with variance $1$ and $S_n = \frac{1}{\sqrt{n}}(X_1+\ldots+X_n)$, then the sequence $(h(S_n))_{n \geq 1}$ is nondecreasing. The convergence of this sequence to $h(G)$, where $G$ is a standard Gaussian random variable, was established much earlier by Barron in \cite{B86}, under minimal conditions that $h(S_{n_0})>-\infty$ for at least one $n_0 \ge 1$ (see also the work \cite{L59} of Linnik for some partial results). In view of these results the following natural problem arises.

\begin{prob}\label{prob:general}
For a given sequence $X_1, \ldots, X_n$ of i.i.d. random variables with finite second moment find the maximum of the function 
\[
S_+^{n-1} \ni (a_1,\ldots, a_n) \mapsto h\left(\sum_{i=1}^n a_i X_i\right).
\] 
What if $S_+^{n-1}$ is replaced with $S^{n-1}$? 
\end{prob}

\noindent Here by $S^{n-1}$ we denote the unit Euclidean sphere centered at the origin and  we take $S_+^{n-1}= S^{n-1} \cap [0,\infty)^n$. Note that if $X_i$ are i.i.d. then $\var(\sum_{i=1}^n a_i X_i) = \var(X_1) \sum_{i=1}^n a_i^2$ and hence in the above problem we are looking for the maximum of entropy of weighted sums of i.i.d. random variables under a fixed variance.

Before we state our main result we briefly introduce some notation. In this article $| \cdot|$ denotes the standard Euclidean norm and $\scal{\cdot}{\cdot}$ stands for the standard scalar product in $\R^n$.  By $\sim$ we denote equality in distribution of random variables. By Gamma $(\alpha, \beta)$ we mean a probability distribution admitting a density $\Gamma(\alpha)^{-1}\beta^{\alpha}x^{\alpha-1}e^{-\beta x}$ on $(0, \infty)$, and in case $\beta = 1$ we abbreviate it to Gamma($\alpha$). We also implicitly assume that in our abstract statements all integrals and expected values are well-defined and may have values $\pm \infty$. These statements are then used in very concrete settings where those quantities are easily seen to be well-defined and finite.

Our main result reads as follows.

\begin{thm}\label{prop:diagonal-chaos}
Let $X_1, \ldots, X_n$ be i.i.d.\ Gamma$(\alpha)$ random variables with $\alpha > 0$. Then for any integer $n\geq1/\alpha$ and any nonnegative real numbers $d_1, \ldots, d_n$ satisfying $\sum_{i=1}^n d_i^2 = 1$ one has
\[
	h\left( \sum_{i=1}^n d_i X_i \right) \leq h\left( \frac{1}{\sqrt{n}}\sum_{i=1}^n X_i \right), 
\]
with equality if and only if $d_1=\ldots= d_n=1/\sqrt{n}$.
\end{thm}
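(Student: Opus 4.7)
The plan is to reparametrize on the simplex via $p_i = d_i^2$ and to establish Schur-concavity on $\Delta_{n-1} = \{p \in [0,1]^n : \sum_i p_i = 1\}$ of the function
\[
F(p) \;=\; h\!\left(\sum_{i=1}^n \sqrt{p_i}\, X_i\right).
\]
By the i.i.d.\ assumption $F$ is symmetric, and the uniform vector $(1/n, \ldots, 1/n)$ is majorized by every element of $\Delta_{n-1}$, so Schur-concavity of $F$ immediately gives the claimed maximum, with the equality case following from strict Schur-concavity off the diagonal.

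\textbf{Reduction to a pairwise lemma.} By the standard criterion, Schur-concavity of $F$ reduces to the following two-coordinate statement: for any fixed values of $p_3, \ldots, p_n$ and any $s \geq 0$, the map $(p_1, p_2) \mapsto F(p)$ subject to $p_1 + p_2 = s$ attains its maximum at $p_1 = p_2 = s/2$. Writing $a = \sqrt{p_1}$, $b = \sqrt{p_2}$, and $Y = \sum_{k \geq 3} \sqrt{p_k}\, X_k$ (independent of $X_1, X_2$), this becomes
\[
h(a X_1 + b X_2 + Y) \;\leq\; h\!\left(\sqrt{\tfrac{a^2+b^2}{2}}\,(X_1 + X_2) + Y\right)
\]
for all $a, b \geq 0$. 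Parametrizing $a = r\cos\theta$, $b = r\sin\theta$ with $r$ fixed, the entropy on the left is symmetric about $\theta = \pi/4$, so $\theta = \pi/4$ is automatically a stationary point; what remains is to show that it is a global maximum.

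\textbf{Role of $n \geq 1/\alpha$ and the main obstacle.} The hypothesis on $n$ is precisely what guarantees that the extremizer $\tfrac{1}{\sqrt{n}}\sum_i X_i$, which is a scaled Gamma$(n\alpha)$ variable, has a log-concave density (recall Gamma$(\beta)$ is log-concave iff $\beta \geq 1$). I would exploit this to control the sign of $\tfrac{\dd^2}{\dd\theta^2} h(aX_1 + bX_2 + Y)$ via a Fisher-information/de Bruijn-style argument or an entropy-deficit estimate that leverages log-concavity of the merged variable $\sqrt{(a^2+b^2)/2}\,(X_1 + X_2)$ together with the regularity of $Y$ supplied by the remaining $n - 2$ Gamma summands. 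The decisive difficulty is this pairwise lemma itself: weighted Gamma sums with distinct weights admit only confluent-hypergeometric densities, so a brute-force computation is unwieldy. I therefore expect the cleanest route to combine (i) a stochastic comparison of $aX_1 + bX_2$ with $\sqrt{(a^2+b^2)/2}\,(X_1 + X_2)$ in the convex or dispersive order, and (ii) a monotonicity principle that transfers such orderings to the differential entropy after convolution with the noise $Y$, with the condition $n\alpha \geq 1$ ensuring that the log-concavity needed in step (ii) is available.
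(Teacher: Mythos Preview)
Your reduction to a two-coordinate lemma is exactly right and matches the paper's Lemma~5: by symmetry and a compactness/continuity argument, it suffices to show that equalizing any pair $(d_1,d_2)$ (while keeping $d_1^2+d_2^2$ fixed and the remaining coordinates frozen) increases the entropy. You also correctly identify the role of the hypothesis $n\alpha\ge 1$: it is what makes the density $p_n$ of the extremizer $n^{-1/2}\sum_i X_i$ log-concave, so that $\Phi_n=-\ln p_n$ is convex.

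The gap is that you do not prove the pairwise lemma; you only list candidate mechanisms. Neither of them closes easily. For the convex-order route, note that after centering, $U=d_1Y_1+d_2Y_2$ and $V=\sqrt{(d_1^2+d_2^2)/2}\,(Y_1+Y_2)$ have \emph{both} the same mean and the same variance. The implication ``$U\prec_{\mathrm{cx}} V$ and $V$ log-concave $\Rightarrow h(U)\le h(V)$'' is exactly Lemma~1(b) with $\Phi=-\ln f_V$, but you would first need $U\prec_{\mathrm{cx}} V$; with matching first and second moments this is a strictly stronger ordering than what general majorization arguments provide, and there is no off-the-shelf result delivering it for distinct-weight Gamma convolutions. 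The paper in fact discusses (Section~7.1) why convex-order methods, which are tailored to a fixed-mean constraint, do not transfer directly to the fixed-variance constraint here. A Fisher-information/de~Bruijn computation of $\partial_\theta^2 h$ would require differentiating through the confluent-hypergeometric density of $aX_1+bX_2$ convolved with $Y$; you have not indicated how to control the sign of the resulting expression, and log-concavity of the target alone does not obviously do it.

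What the paper actually does for the pairwise step is different and quite specific. Using Lemma~1(b) with $\Phi_n=-\ln p_n$, it reduces to $\E\Phi_n(s+U)\le \E\Phi_n(s+V)$ for all admissible shifts $s$. Since $\Phi_n(x)=(1-n\alpha)\ln(x+\alpha\sqrt n)+\sqrt n\,x+c_n$, one may subtract an arbitrary quadratic $a_0+a_1x+a_2x^2$ without changing the comparison (the first two moments of $U$ and $V$ match). The heart of the argument (Lemmas~3 and~7) is then: (i) the difference of densities $f_V-f_U$ changes sign \emph{exactly three times}, the first sign change being at the left endpoint of $\operatorname{supp} f_U$; (ii) choosing $a_0,a_1,a_2$ so that $\Phi_n-(a_0+a_1x+a_2x^2)$ vanishes at those three points forces the integrand $(\Phi_n-\text{quadratic})(f_V-f_U)$ to have constant sign, because a function of the form $-\tau\ln(x-L)+ax^2+bx+c$ with $\tau>0$ can have at most three zeros. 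Step~(i) is the technical crux: it uses the explicit convolution formula (Lemma~6) and a strict-concavity argument for $\ln f_V-\ln f_U$ on the common support; the three-sign-change lower bound comes for free from the matching moments (Corollary~2). None of this is visible from a convex-order or Fisher-information heuristic, and it is the missing idea in your proposal.
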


\noindent In the proof of this result we use the \emph{method of intersecting densities}, developed in \cite{ENT18-2} in the context of moment problems for log-concave random variables. This method is described in Section \ref{sec:intersecting} in a form suitable for our investigation. More detailed discussion of this method is given is Section \ref{sec:method}. 

Let us now discuss the state of the art of Problem \ref{prob:general}. One may ask whether or not the maximum in Problem \ref{prob:general} is achieved when $a_1=\ldots = a_n=1/\sqrt{n}$. Unfortunately, the answer  is negative even for symmetric random variables in the case $n=2$, as shown in \cite{BNT16}. In fact, solving Problem \ref{prob:general} is a difficult and complex issue even for the simplest random variables $X_i$. As an example, let us mention  $X_i$ being uniformly distributed in $[-1,1]$ (see Question \ref{que:uniform} in Section \ref{sec:op}), in which case it is believed that the maximum is attained for equal coefficients, but as far as we know it has not yet been proven. The only general result that we are aware of is Theorem 8 in \cite{ENT18}, where the problem was solved in the case of $X_i$ being i.i.d. Gaussian mixtures, that is, random variables of the form $X_i \sim R_i \cdot g_i$, where random variables $g_i \sim \mc{N}(0,1)$ and random variables $R_i>0$ are independent. In fact, the authors showed a stronger statement: if $X_i$ are i.i.d. Gaussian mixtures and $(a_1^2, \ldots, a_n^2) \prec (b_1^2, \ldots, b_n^2)$ in the Schur order, then $h(\sum_{i=1}^n a_i X_i) \geq h(\sum_{i=1}^n b_i X_i)$. Let us recall that the definition of the Schur order is that for vectors $(p_1, \ldots, p_n)$ and $(q_1, \ldots, q_n)$  we have $(p_1,\ldots, p_n) \prec (q_1,\ldots, q_n)$ iff $\sum_{i=1}^k p_i^\ast \leq \sum_{i=1}^k q_i^\ast$ for all $k=1,\ldots, n$ with equality for $k=n$, where $(p_i^\ast)$ and $(q_i^\ast)$ are nonincreasing rearrangements of the sequences $(p_i)$ and $(q_i)$. Note that for any $(a_1,\ldots, a_n) \in S^{n-1}$ we have $(1/n, \ldots, 1/n) \prec (a_1^2, \ldots, a_n^2) \prec (1,0,\ldots, 0)$, which shows that indeed in this case $a_1= \ldots = a_n= 1/\sqrt{n}$ gives the maximum in Problem \ref{prob:general}, whereas $a_1=1$, $a_2=\ldots=a_n=0$ gives the minimum. The latter is, in fact, true not only for Gaussian mixtures, but for arbitrary i.i.d. random variables $X_i$, which is an easy consequence of the famous entropy power inequality of Shannon and Stam (see \cite{S48, S59}) in the following linearized form: if the real numbers $a_i$ satisfy $\sum_{i=1}^n a_i^2=1$, then for a sequence of independent random variables $X_1, \ldots, X_n$ one has $h(\sum_{i=1}^n a_i X_i) \geq \sum_{i=1}^n a_i^2 h(X_i)$.

As an application of our main result, we study entropy of Gaussian quadratic forms. We introduce the following definition.

\begin{definition}
Let $G_n$ be a standard $\mc{N}(0,I_n)$ Gaussian random vector in $\R^n$ ($I_n$ stands for the $n \times n$ identity matrix). For a symmetric $n \times n$ real matrix $A$ we define $X_A = \scal{AG_n}{G_n}$. The random variable $X_A$ is called a \emph{Gaussian quadratic form} (in $n$ variables). If $A$ is additionally positive semi-definite, then $X_A$ is called a \emph{nonnegative Gaussian quadratic form}.
\end{definition}

\noindent Our main result easily gives the following corollary. 

\begin{cor}\label{thm:main}
Let $X_A$ be a nonnegative Gaussian quadratic form. Then
\[
	h\left( X_A \right) \leq h\left( \chi^2(n) \right) + \frac12 \ln  \var(X_A) - \frac12 \ln (2n)
\]
with equality if and only if $A= \lambda I_n$ for some $\lambda>0$. Here $\chi^2(n)= |G_n|^2$ is a random variable with a chi-square distribution with $n$ degrees of freedom. Equivalently, if $G$ is a Gaussian random variable with the same variance as $X_A$, then $h(G)-h(X_A) \geq \frac12\ln(4 \pi e n)-h(\chi^2(n)) = \frac{2}{3n} + o(1/n)$. 
\end{cor}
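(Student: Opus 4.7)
The plan is to reduce the corollary to Theorem \ref{prop:diagonal-chaos} via a spectral decomposition. Since $G_n$ is rotationally invariant and $A$ is symmetric positive semi-definite, we may diagonalize $A = U^\top \mathrm{diag}(\lambda_1,\ldots,\lambda_n) U$ with $\lambda_i \geq 0$, obtaining the distributional identity $X_A \sim \sum_{i=1}^n \lambda_i G_i^2$ for i.i.d.\ $G_i \sim \mc{N}(0,1)$. A quick density computation shows that $G_i^2/2$ has law Gamma$(1/2)$; setting $Z_i = G_i^2/2$ and $L = \sqrt{\sum_j \lambda_j^2}$, we write
\[
X_A \;=\; 2\sum_{i=1}^n \lambda_i Z_i \;=\; 2L \sum_{i=1}^n d_i Z_i, \qquad d_i := \lambda_i/L,
\]
so that $d_i \geq 0$ and $\sum d_i^2 = 1$.

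Next I would apply Theorem \ref{prop:diagonal-chaos} with $\alpha = 1/2$, whose hypothesis $n \geq 1/\alpha = 2$ is satisfied (the degenerate case $n=1$ forces $A = \lambda I_1$ so equality is immediate by direct substitution). This yields
\[
h\!\left( \sum_{i=1}^n d_i Z_i \right) \;\leq\; h\!\left( \tfrac{1}{\sqrt n} \sum_{i=1}^n Z_i \right),
\]
with equality iff $d_1 = \cdots = d_n$. Using the scaling identity $h(cY) = h(Y) + \ln c$ for $c>0$, multiplying the left side by $2L$ and the right side by $2L$ as well yields
\[
h(X_A) \;\leq\; h\!\left( \tfrac{2L}{\sqrt n} \sum_{i=1}^n Z_i \right) \;=\; h\!\left( \sum_{i=1}^n G_i^2 \right) + \ln \tfrac{L}{\sqrt n} \;=\; h(\chi^2(n)) + \tfrac{1}{2}\ln\tfrac{L^2}{n},
\]
since $\sum G_i^2 = |G_n|^2 \sim \chi^2(n)$.

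To conclude the inequality I would compute $\var(X_A) = \sum_i \lambda_i^2 \var(G_i^2) = 2L^2$, so $L^2/n = \var(X_A)/(2n)$, which gives precisely the claimed bound. The equality case $d_1 = \cdots = d_n = 1/\sqrt n$ translates to $\lambda_1 = \cdots = \lambda_n > 0$, i.e.\ $A = \lambda I_n$. For the reformulation in terms of the Gaussian $G$ with $\var(G) = \var(X_A)$, subtract the bound from $h(G) = \tfrac12 \ln(2\pi e \var(X_A))$; the variance terms cancel and leave $h(G) - h(X_A) \geq \tfrac12 \ln(4\pi e n) - h(\chi^2(n))$.

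The only piece that requires more than bookkeeping is the asymptotic $\tfrac12 \ln(4\pi e n) - h(\chi^2(n)) = \tfrac{2}{3n} + o(1/n)$. Using the closed form
\[
h(\chi^2(n)) \;=\; \tfrac{n}{2} + \ln 2 + \ln \Gamma(n/2) - (n/2 - 1)\psi(n/2),
\]
one expands via Stirling $\ln\Gamma(n/2) = \tfrac{n-1}{2}\ln(n/2) - n/2 + \tfrac12 \ln(2\pi) + \tfrac{1}{6n} + O(n^{-3})$ and the digamma asymptotic $\psi(n/2) = \ln(n/2) - \tfrac{1}{n} - \tfrac{1}{3n^2} + O(n^{-4})$; all logarithmic and constant terms cancel against $\tfrac12 \ln(4\pi e n)$, while the $1/n$ coefficients combine to give $\tfrac{2}{3n}$. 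I expect the bookkeeping in this expansion to be the fiddliest (but routine) step; no deeper obstacle arises, as the main inequality is a direct corollary of Theorem \ref{prop:diagonal-chaos}.
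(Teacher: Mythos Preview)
Your proposal is correct and follows essentially the same route as the paper: diagonalize $A$ by rotational invariance of $G_n$, identify $g_i^2/2$ with Gamma$(1/2)$, invoke Theorem~\ref{prop:diagonal-chaos} with $\alpha=1/2$, and finish via the scaling rule $h(cY)=h(Y)+\ln c$ together with $\var(X_A)=2\sum_i\lambda_i^2$. The only organizational difference is that the paper packages the Gamma$(1/2)$ step as the separate Corollary~\ref{cor:1} and then deduces Corollary~\ref{thm:main} from it, whereas you go straight from Theorem~\ref{prop:diagonal-chaos}; your explicit treatment of $n=1$ and the Stirling/digamma asymptotic for $\tfrac12\ln(4\pi e n)-h(\chi^2(n))$ are details the paper leaves implicit.
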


\begin{rem}
Corollary \ref{thm:main} shows that, in a sense of relative entropy, a Gaussian random variable cannot be approximated by a nonnegative Gaussian quadratic form too well, that is, if $\var(X_A)=\var(G)$, then $D(X \| G) \geq \frac{2}{3n}+o(1/n)$.   
\end{rem}

\noindent As we shall explain in Section \ref{sec:reduction}, rotation invariance of the standard Gaussian random vector $G_n$ in $\R^n$ allows us to reduce Corollary \ref{thm:main}  to the case of diagonal quadratic forms with nonnegative entries.   Since for a standard  $\mc{N}(0,1)$ Gaussian random variable $g$ its square $g^2$ has the same distribution as $2X$, where $X$ is a Gamma$(1/2)$ random variable, we get the following fact, of which Corollary \ref{thm:main} is a simple consequence. 

\begin{cor}\label{cor:1}
Let $g_1, \ldots, g_n$ be independent standard Gaussian random variables. Then for any $n\geq 1$ and any nonnegative real numbers $d_1, \ldots, d_n$ satisfying $\sum_{i=1}^n d_i^2 = 1$ one has
\[
	h\left( \sum_{i=1}^n d_i g_i^2 \right) \leq h\left( \frac{1}{\sqrt{n}}\sum_{i=1}^n g_i^2 \right), 
\]
with equality if and only if $d_1=\ldots= d_n=1/\sqrt{n}$.
\end{cor}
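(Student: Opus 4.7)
The plan is to deduce this corollary directly from Theorem \ref{prop:diagonal-chaos} by exploiting the well-known distributional identity $g_i^2 \sim 2X_i$, where $X_i$ is a Gamma$(1/2)$ random variable. This is the exact reduction mentioned in the paragraph preceding the corollary, so the proof should be essentially a short bookkeeping argument.

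First I would dispose of the trivial case $n=1$: the constraint $d_1^2=1$ with $d_1\geq 0$ forces $d_1=1$, and the claimed inequality is an equality. For $n\geq 2$, I would set $\alpha = 1/2$ and let $X_1,\ldots,X_n$ be i.i.d.\ Gamma$(1/2)$. Because $n\geq 2 = 1/\alpha$, Theorem \ref{prop:diagonal-chaos} applies and yields
\[
h\!\left(\sum_{i=1}^n d_i X_i\right) \;\leq\; h\!\left(\frac{1}{\sqrt{n}}\sum_{i=1}^n X_i\right),
\]
with equality iff $d_1=\ldots=d_n=1/\sqrt{n}$.

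Next I would transfer this to the $g_i^2$ formulation. Since $g_i^2 \sim 2X_i$ and the $g_i$ (hence $g_i^2$) are independent, we get $\sum_i d_i g_i^2 \sim 2\sum_i d_i X_i$ and $\frac{1}{\sqrt{n}}\sum_i g_i^2 \sim \frac{2}{\sqrt{n}}\sum_i X_i$. Using the scaling identity $h(cY)=h(Y)+\ln c$ for $c>0$, both sides of the target inequality pick up exactly the same constant $\ln 2$, so the inequality (together with its equality case) descends verbatim from the previous display.

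I do not anticipate any real obstacle: the whole point of stating Theorem \ref{prop:diagonal-chaos} for general $\alpha$ is precisely to cover $\alpha=1/2$, which is the Gaussian-chaos case. The only mild subtlety is the boundary threshold $n \geq 1/\alpha$ in Theorem \ref{prop:diagonal-chaos}, which forces the separate (and trivial) treatment of $n=1$; one might emphasize that this is why the corollary is stated for all $n\geq 1$ while the general theorem needs $n\geq 1/\alpha$.
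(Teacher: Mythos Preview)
Your proposal is correct and is exactly the reduction the paper intends: the paragraph immediately preceding the corollary states that $g^2\sim 2X$ with $X\sim\mathrm{Gamma}(1/2)$ and that the corollary follows from Theorem~\ref{prop:diagonal-chaos}. Your only addition is the explicit handling of $n=1$ (needed because the theorem requires $n\ge 1/\alpha=2$), which is a harmless and appropriate clarification.
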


\noindent Similarly, since Gamma$(1)$ is the same distribution as Exp($1$), we also get the following corollary.

\begin{cor}\label{cor:2}
Let $X_1, \ldots, X_n$ be independent standard
exponential random variables, i.e. random variables with densities $e^{-x}$ on $[0, \infty)$. Then for any $n\geq 1$ and any nonnegative real numbers $d_1, \ldots, d_n$ satisfying $\sum_{i=1}^n d_i^2 = 1$ one has
\[
	h\left( \sum_{i=1}^n d_i X_i \right) \leq h\left( \frac{1}{\sqrt{n}}\sum_{i=1}^n X_i \right), 
\]
with equality if and only if $d_1=\ldots= d_n=1/\sqrt{n}$.
\end{cor}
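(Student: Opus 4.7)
The plan is to derive this corollary as an immediate specialization of Theorem \ref{prop:diagonal-chaos}. First I would recall that the Gamma$(\alpha)$ density $\Gamma(\alpha)^{-1}x^{\alpha-1}e^{-x}$ specializes, when $\alpha=1$, to $e^{-x}$ on $(0,\infty)$, which is exactly the density of a standard exponential random variable. Hence the sequence $X_1,\ldots,X_n$ appearing in the corollary is i.i.d.\ Gamma$(1)$.

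Next I would check the hypothesis $n\geq 1/\alpha$ of Theorem \ref{prop:diagonal-chaos}: with $\alpha=1$ this becomes $n\geq 1$, which is precisely the assumption of the corollary. Therefore Theorem \ref{prop:diagonal-chaos} applies to the i.i.d.\ sequence $X_1,\ldots,X_n$ for every admissible $n$, and for any nonnegative $d_1,\ldots,d_n$ with $\sum_i d_i^2=1$ it yields
\[
h\Bigl(\sum_{i=1}^n d_i X_i\Bigr)\leq h\Bigl(\frac{1}{\sqrt{n}}\sum_{i=1}^n X_i\Bigr),
\]
with equality characterization transferring verbatim.

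There is essentially no obstacle: all the analytic work is already in Theorem \ref{prop:diagonal-chaos}, and this corollary is only the observation that the parameter range $\alpha=1$, $n\geq 1$ is covered. I would write the argument in two or three sentences, noting the identification Gamma$(1)=$Exp$(1)$, verifying $n\geq 1/\alpha=1$, and citing Theorem \ref{prop:diagonal-chaos} for both the inequality and the equality case.
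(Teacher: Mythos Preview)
Your proposal is correct and matches the paper's approach exactly: the paper simply notes that Gamma$(1)$ is the same distribution as Exp$(1)$ and invokes Theorem \ref{prop:diagonal-chaos}, and you do the same, additionally making explicit the trivial verification $n\geq 1/\alpha=1$.
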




This article is organized as follows. In Section \ref{sec:reduction} we show how Corollary \ref{cor:1} implies Corollary \ref{thm:main}. In Section \ref{sec:intersecting} we describe our key \emph{method of intersecting densities}. The proof of Theorem \ref{prop:diagonal-chaos} is given in Section \ref{sec:gaussy}. In Section \ref{sec:capacity}, as an application of our main result,  we derive a bound on the capacity of transmission channels subject to $n$ independent additive noises having gamma distribution. In Section \ref{sec:op} we present some open problems.  Section \ref{sec:diss} is devoted to further motivations and discussion of connections to the existing literature. Finally, a detailed description of possible strengths and weaknesses of our method is given in Section \ref{sec:method}.  

\section{Proof of Corollary \ref{thm:main}}\label{sec:reduction}

We begin with the following simple lemma.

\begin{lemma}\label{lem:rotation}
Let $X_A$ be a Gaussian quadratic form in $n$ variables and let $U$ be an orthogonal transformation in $\R^n$. Then $X_{U^\ast A U}$ has the same distribution as $X_A$. In particular, every Gaussian quadratic form has the same distribution as a certain Gaussian quadratic form with $A$ being diagonal. If additionally $X_A$ was assumed to be nonnegative, then the associated diagonal matrix has nonnegative entries.
\end{lemma}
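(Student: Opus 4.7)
The proof is short and rests on two standard facts about the standard Gaussian vector and symmetric matrices. The plan is as follows.

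First, I would invoke rotation invariance of $G_n$: for any orthogonal $U$, the vector $UG_n$ has the same distribution as $G_n$, since the density of $G_n$ depends only on $|x|$, which is preserved by $U$. Then I would rewrite
\[
X_{U^\ast A U} = \scal{U^\ast A U\, G_n}{G_n} = \scal{A (UG_n)}{UG_n},
\]
using that $U^\ast$ is the adjoint of $U$. The right-hand side has the same law as $\scal{AG_n}{G_n} = X_A$, which proves the first claim.

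Second, to get the reduction to diagonal form, I would apply the spectral theorem for real symmetric matrices: there exists an orthogonal $U$ such that $U^\ast A U = D$, where $D$ is diagonal with the eigenvalues of $A$ on the diagonal. Combining with the first part, $X_A$ has the same distribution as $X_D$.

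Finally, if $X_A$ is a nonnegative Gaussian quadratic form, then by definition $A$ is positive semi-definite, so all its eigenvalues are nonnegative, giving nonnegative entries of $D$. There is no real obstacle here; the only thing to be slightly careful about is writing the inner product manipulation cleanly so that the symmetry of $A$ and the orthogonality of $U$ are used explicitly.
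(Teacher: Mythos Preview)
Your proposal is correct and follows essentially the same approach as the paper: rotation invariance of $G_n$ to show $X_{U^\ast A U}\sim X_A$, then the spectral theorem to diagonalize, and finally the observation that positive semi-definiteness gives nonnegative eigenvalues. The paper's proof is nearly identical in both structure and detail.
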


\begin{proof}
Let $G_n \sim \mc{N}(0,I_n)$. Note that because of rotation invariance of $G_n$, the random vector $G_n'=U G_n$ has the same distribution as $G_n$. We have $X_{U^\ast A U} = \scal{U^\ast A U G_n}{G_n} = \scal{A U G_n}{U G_n} =\scal{A G_n'}{G_n'}$, which has the same distribution as $\scal{AG_n}{G_n}=X_A$. To prove the second part it suffices to observe that every symmetric matrix is diagonalizable by a certain orthogonal change of basis $U$. If the matrix $A$ is positive semi-definite, then the resulting diagonal matrix clearly has nonnegative entries.

\end{proof}

\begin{lemma}\label{lem:variance}
Let $X_A$ be a Gaussian quadratic form. Then $\E X_A = \tr(A)$ and $\var(X_A)=2\tr(A^2)$.
\end{lemma}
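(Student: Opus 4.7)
The plan is to use the diagonalization provided by Lemma \ref{lem:rotation} to reduce the computation to a sum of independent squared Gaussians, where both the expectation and the variance are immediate. Since both $\tr(A)$ and $\tr(A^2)$ are invariant under orthogonal similarity (by the cyclic property of the trace, $\tr(U^\ast A U)=\tr(A)$ and $\tr((U^\ast A U)^2) = \tr(U^\ast A^2 U) = \tr(A^2)$), and by Lemma \ref{lem:rotation} the distribution of $X_A$ also depends only on the orthogonal similarity class of $A$, we lose no generality in assuming $A = \mathrm{diag}(\lambda_1,\ldots,\lambda_n)$.

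In the diagonal case, writing $G_n = (g_1,\ldots,g_n)$ with $g_i$ i.i.d.\ standard Gaussians, we have
\[
X_A = \scal{A G_n}{G_n} = \sum_{i=1}^n \lambda_i g_i^2.
\]
From the well-known Gaussian moments $\E g_i^2 = 1$ and $\E g_i^4 = 3$ we obtain $\var(g_i^2) = 3 - 1 = 2$. The expectation is then
\[
\E X_A = \sum_{i=1}^n \lambda_i \E g_i^2 = \sum_{i=1}^n \lambda_i = \tr(A),
\]
and by independence of the $g_i$ (hence of the $g_i^2$),
\[
\var(X_A) = \sum_{i=1}^n \lambda_i^2 \var(g_i^2) = 2 \sum_{i=1}^n \lambda_i^2 = 2 \tr(A^2),
\]
as claimed. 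There is no real obstacle here: the only thing to verify carefully is the orthogonal invariance of $\tr(A^2)$, which follows from $(U^\ast A U)^2 = U^\ast A^2 U$ together with the cyclicity of the trace.
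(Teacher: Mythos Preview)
Your proof is correct and follows essentially the same approach as the paper: reduce to the diagonal case via Lemma~\ref{lem:rotation} and the orthogonal invariance of $\tr(A)$ and $\tr(A^2)$, then compute directly using $\E g_i^2=1$ and $\E g_i^4=3$. The only cosmetic difference is that you invoke additivity of variance for independent summands directly, whereas the paper computes $\E X_A^2$ first and subtracts $(\E X_A)^2$; these are equivalent.
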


\begin{proof}
By Lemma \ref{lem:rotation} and by the invariance of $\tr(A)$ and $\tr(A^2)$ under matrix similarity, the statement is invariant under the transformation $A \to U^\ast A U$ for any orthogonal matrix $U$. We can therefore assume that $A$ is diagonal. In this case $X_A = \sum_{i=1}^n a_{ii} g_i^2$, where $a_{ii}$ are some real numbers and $g_i$ are i.i.d. $\mc{N}(0,1)$ random variables. Clearly, $\E X_A = \sum_{i=1}^n a_{ii} = \tr(A)$. Moreover,
\begin{align*}
	\E X_A^2 & = \E \Big( \sum_{i=1}^n a_{ii} g_i^2 \Big)^2 = \sum_{i=1}^n a_{ii}^2 \E g_i^4 + \sum_{i \ne j} a_{ii} a_{jj} \E g_i^2 g_j^2  = 3 \sum_{i=1}^n a_{ii}^2 + \sum_{i \ne j} a_{ii} a_{jj} \\ & = 2 \sum_{i=1}^n a_{ii}^2 + \Big(  \sum_{i=1}^n a_{ii}\Big)^2 = 2 \tr(A^2) + (\tr(A))^2.
\end{align*}
Hence, $\var(X_A) = \E X_A^2 - (\E X_A)^2 = 2\tr(A^2)$.

\end{proof}

Now we show how Corollary \ref{cor:1} implies Corollary \ref{thm:main}.

\begin{proof}[Proof of Corollary \ref{thm:main}]
Thanks to Lemma \ref{lem:rotation}, we can assume that $A$ is diagonal, that is, $X_A = \sum_{i=1}^n d_i g_i^2$ for some $d_i \geq 0$.  Since for any random variable $X$ and any non-zero real number $\lambda$ one has $h(\lambda X)=h(X)+\ln|\lambda|$ and $\var(\lambda X) = \lambda^2 \var(X)$, the statement is invariant under scaling of $X_A$. Thus, one can also assume that $\sum_{i=1}^n d_i^2 = 1$. In this case, due to Lemma \ref{lem:variance}, one has $\var(X_A)= 2$. Hence, Corollary \ref{cor:1} yields that $h\left( \chi^2(n) \right) + \frac12 \ln  \var(X_A) - \frac12 \ln (2n) = h(\chi^2(n))-\frac12 \ln n = h(n^{-1/2} \chi^2(n)) \geq h(X_A)$.  The equality cases follow easily from Lemma \ref{lem:rotation} and equality cases in Corollary \ref{cor:1}.   
\end{proof}

\section{The method of intersecting densities}\label{sec:intersecting}

We begin by recalling the following standard bound for the entropy.

\begin{lemma}\label{lem:entropy-standard-bound}
Suppose $p,q$ are probability densities of random variables $U$ and $V$, respectively. Take $\Phi=-\ln q$. Then 
\begin{itemize}
	\item[(a)] $-\int p \ln p \leq - \int p \ln q$, that is, $h(U) \leq \E\Phi(U)$,
	\item[(b)] if $\E \Phi(U) \leq \E \Phi(V)$, then $h(U) \leq h(V)$.  
\end{itemize}
  
\end{lemma}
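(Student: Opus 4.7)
The plan is to deduce both parts from Gibbs' inequality (nonnegativity of relative entropy) combined with the trivial identity $h(V) = \E\Phi(V)$.

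For part (a), I would start from the difference
\[
h(U) - \E\Phi(U) = -\int p \ln p + \int p \ln q = \int p \ln(q/p),
\]
and then apply the elementary bound $\ln x \leq x - 1$, valid for $x > 0$ with equality only at $x=1$, to obtain
\[
\int p \ln(q/p) \;\leq\; \int p\,(q/p - 1) \;=\; \int q - \int p \;=\; 0.
\]
Rearranging gives $h(U) \leq \E\Phi(U)$. The only subtle point worth flagging is the convention at zeros of $p$ and $q$: the integral is taken over $\{p>0\}$ with the usual rule $0\ln 0 = 0$, and if $p>0$ on a set of positive measure where $q=0$ then $\E\Phi(U)=+\infty$ and the inequality is trivially true.

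For part (b), I would simply apply the definition of $\Phi$ to $V$ itself, observing that
\[
h(V) \;=\; -\int q\ln q \;=\; \int q\,\Phi \;=\; \E\Phi(V).
\]
Chaining part (a) with the hypothesis $\E\Phi(U) \leq \E\Phi(V)$ then yields
\[
h(U) \;\leq\; \E\Phi(U) \;\leq\; \E\Phi(V) \;=\; h(V),
\]
which is the claim. There is no substantive obstacle here; the lemma simply packages two classical observations in a form tailored to the \emph{method of intersecting densities}, where the auxiliary density $q$ will be chosen cleverly so that the inequality $\E\Phi(U) \leq \E\Phi(V)$ reduces to a finite family of moment-type comparisons between $U$ and $V$.
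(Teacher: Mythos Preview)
Your proof is correct and essentially identical to the paper's own argument: both parts use the elementary bound $\ln x \leq x-1$ to establish (a), handle the support issue in the same way, and then obtain (b) by chaining (a) with the identity $h(V)=\E\Phi(V)$.
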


\begin{proof}
(a) We can assume that the support of $p$ is contained in the support of $q$ (otherwise the right-hand side is $+\infty$ and there is nothing to prove). Since for $x \geq 0$ we have $\ln x \leq x-1$, one gets
\begin{align*}
	-\int p \ln p  + \int p \ln q & = \int_{\supp(p)} (p \ln q- p \ln p) = \int_{\supp(p)} p \ln(q/p) \\ & \leq \int_{\supp(p)} p(q/p-1) \leq 0. 
\end{align*}

(b) From part (a) we have $h(U) \leq \E \Phi(U) \leq \E \Phi(V) = -\int q \ln q = h(V)$.  
\end{proof}

In our proof of Theorem \ref{prop:diagonal-chaos}, in order to verify the assumption $\E \Phi(U) \leq \E \Phi(V)$ of Lemma \ref{lem:entropy-standard-bound}(b), we shall use a trick that we call  \emph{the method of intersecting densities}. The next lemma describes this crucial idea. Let us first introduce the following definition. 

\begin{definition}
Let $f : \R \to \R$ be measurable function. We say that $f$ \emph{changes sign} at point $x \in \R$ if one of the following conditions holds:
\begin{itemize}
	\item[(a)] there exist $y, z \in \R$ such that $y < x < z$ and $f$ is positive a.e. on $(x, z)$, nonpositive a.e. on $(y, x)$ and negative on some subset of $(y, x)$ of positive measure;
	\item[(b)] there exist $y, z \in \R$ such that $y < x < z$ and $f$ is negative a.e. on $(x, z)$, nonnegative a.e. on $(y, x)$ and positive on some subset of $(y, x)$ of positive measure.  
\end{itemize}
We call such $x$ the \emph{sign change point} of $f$. If $f$ has precisely $n$ sign change points, then we say that $f$ \emph{changes sign exactly $n$ times}.  
\end{definition}

Let us observe that if all sign change points of $f$ are $x_1, \dots, x_n$, then $f(x)(x - x_1)\dots(x - x_n)$ is either nonpositive a.e. on $\R$ or nonnegative a.e. on $\R$.

\begin{lemma}\label{lem:method-of-int-dens}
Suppose $\Phi(x)=-\tau \ln(x-L) + \alpha x^2 + \beta x + \gamma$, where $\alpha, \beta, \gamma, L$ are arbitrary real numbers and $\tau>0$. Suppose also that $U,V$ are real random variables with densities $f_U$ and $f_V$ supported in $[L,\infty)$, such that $\E U= \E V$, $\E U^2= \E V^2$, and the function $f_V-f_U$ changes sign exactly three times and is positive a.e. before the first sign change point. Then
$\E \Phi(U) < \E \Phi(V)$.    
\end{lemma}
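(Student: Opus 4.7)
My plan is to compute $\E\Phi(V) - \E\Phi(U) = \int \Phi(x)(f_V(x) - f_U(x))\,dx$ and show this quantity is strictly positive. Writing $g := f_V - f_U$, the three equalities $\int g = 0$, $\int x\,g = 0$, $\int x^2 g = 0$ (from the matching totals and first two moments) annihilate the polynomial part $\alpha x^2 + \beta x + \gamma$ of $\Phi$. The difference therefore collapses to $-\tau \int \ln(x - L)\, g(x)\,dx$, so it suffices to prove
\[
\int \ln(x - L)\, g(x)\,dx < 0.
\]

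Let $x_1 < x_2 < x_3$ be the three sign-change points of $g$; since $g$ is positive a.e.\ on an interval to the left of $x_1$ and vanishes on $(-\infty, L)$, we have $x_1 > L$. The point is that, because of the three moment conditions, I may subtract \emph{any} quadratic polynomial $P$ from $\ln(x-L)$ without affecting the integral above. The key step is to take $P$ to be the unique quadratic Lagrange interpolant of $\ln(x-L)$ at the nodes $x_1, x_2, x_3$, and to analyse the remainder $F(x) := \ln(x - L) - P(x)$, which vanishes at those three points.

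The heart of the argument is to show that $F$ has exactly these three zeros with the correct sign pattern. Writing $P(x) = \alpha_P x^2 + \beta_P x + \gamma_P$, one has $F''(x) = -(x - L)^{-2} - 2\alpha_P$. If $\alpha_P \geq 0$, $F$ would be strictly concave on $(L, \infty)$ and could have at most two zeros, contradicting the existence of three. Hence $\alpha_P < 0$, $F''$ changes sign exactly once on $(L,\infty)$, and by two applications of Rolle's theorem $F$ has at most three zeros there, so precisely the simple zeros $x_1, x_2, x_3$. Since $F(x) \to -\infty$ as $x \to L^+$ (from the log) and $F(x) \to +\infty$ as $x \to \infty$ (since $-\alpha_P x^2 \to +\infty$), the sign of $F$ on the four successive intervals $(L,x_1), (x_1,x_2), (x_2,x_3), (x_3,\infty)$ is $-, +, -, +$, which is exactly opposite to the sign pattern $+, -, +, -$ of $g$ dictated by the hypothesis.

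Consequently $F(x)\,g(x) \leq 0$ almost everywhere and is strictly negative on a set of positive measure (for instance inside $(L, x_1)$, where $g > 0$ a.e.\ and $F < 0$). Integrating yields $\int \ln(x - L)\, g\,dx = \int F\,g\,dx < 0$, so $\E\Phi(V) - \E\Phi(U) = -\tau \int \ln(x - L)\, g\,dx > 0$, as required. I expect the main obstacle to be the zero-count for $F$; once one recognises that the quadratic interpolant must have negative leading coefficient and exploits the resulting single change of convexity, the rest of the argument is essentially bookkeeping.
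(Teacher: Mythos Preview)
Your proof is correct and follows essentially the same approach as the paper's: subtract the quadratic interpolant at the three sign-change points and then show via Rolle's theorem that the resulting function $F$ (which is the paper's $-\Psi/\tau$) has exactly three simple zeros, so that the integrand has constant sign. The only cosmetic difference is that you count zeros by analysing $F''$ and first deducing $\alpha_P<0$, whereas the paper works with $\Psi'$ and observes that $\Psi'(x)=0$ is equivalent to a quadratic equation; both routes give the same bound.
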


\begin{proof}
Our goal is to prove the inequality $\int \Phi (f_V-f_U) > 0$. Because of our assumptions, we have $\int x^i f_U(x)\dd x = \int x^i f_V(x)\dd x$ for $i=0,1,2$. Our desired inequality is therefore equivalent to
\begin{equation}\label{eq:pos-int}
	\int_L^{\infty} (\Phi(x) - (a_2 x^2 + a_1 x + a_0))(f_V(x)-f_U(x)) \dd x > 0,
\end{equation}
where $a_0, a_1, a_2$ are arbitrary real numbers. A crucial step now is to explore the freedom of the choice of these three numbers. We know that $f_V-f_U$ changes sign exactly three times at some points $x_0<x_1<x_2$.  We choose $a_0,a_1,a_2$ so that $\Phi(x_i)=a_2 x_i^2 + a_1 x_i+a_0$ for $i=0,1,2$. This can be done because the matrix $(x_i^j)_{i,j=0}^2$, associated to the system of linear equations that $a_0,a_1,a_2$ have to satisfy, is a $3 \times 3$  Vandermonde matrix. 

Let $\Psi(x)=\Phi(x) - (a_2 x^2 + a_1 x + a_0)$. We now show that the integrand $\Psi(f_V-f_U)$ in \eqref{eq:pos-int} is nonnegative, which will clearly finish the proof (the obtained inequality will be strict because it will also easily follow that this integrand is not an a.e. zero function).  We already know that $\Psi(x_0)=\Psi(x_1)=\Psi(x_2)=0$ and that $f_V-f_U$ changes sign at $x_0, x_1, x_2$ and is positive a.e. before the first sign change point $x_0$. Since close to $x=L$ the function $\Psi(x)$ is positive (note that $\lim_{x \to L^+} \Psi(x)=+\infty$), it is enough to show that $\Psi$ also changes its sign at $x_0, x_1, x_2$ and that these are the only sign change points of this function. 

To show this we observe that the function $\Psi$ has the form $\Psi(x)= -\tau \ln(x-L)+ax^2+bx+c$ for some real numbers $a,b,c$. This function is clearly smooth on $(L,\infty)$. It is enough to show that $\Psi$ has only three zeros and none of them is a zero of $\Psi'$ (then we easily conclude that the zeros correspond to sign changes). Suppose that $\Psi$ has more than three zeros, counting multiplicities ($x$ is a zero of multiplicity $k$ if $\Psi^{(j)}(x)=0$ for $j=0,1,\ldots, k-1$, where $\Psi^{(j)}$ is the $j$th derivative of $\Psi$, with the convention that $\Psi^{(0)}=\Psi$). Since $\Psi$ itself has at least three distinct zeros, by Rolle's theorem we deduce that $\Psi'$ has at least three distinct zeros. But for $x>L$ we have
$
	\Psi'(x) = \frac{-\tau}{x-L} + 2ax+b.
$
Thus, the equation $\Psi'(x)=0$ is equivalent to the quadratic equation $(2 a x + b)(x-L)=\tau$, which cannot have more than two solutions (unless $\Psi'$ vanishes identically, which clearly  does not hold in our case as $\tau>0$). We arrived at a contradiction.
\end{proof}

\noindent In Lemma \ref{lem:method-of-int-dens} we assumed that $f_U-f_V$ changes sign exactly three times and that $\E U= \E V$ and $\E U^2 = \E V^2$. Our next lemma shows that the conditions $\E U= \E V$ and $\E U^2 = \E V^2$ are enough to guarantee that $f_U-f_V$ changes sign at least three times. 

\begin{lemma}\label{lem:interlacing}
Let $k,n \geq 1$ be integers and let $g: \R \to \R$ be measurable. Suppose that $g$ changes sign at exactly $k$ points. Assume moreover that $\int_{\R} x^j g(x) \dd x = 0$ for all $j=0,1,\ldots, n-1$. Then $k \geq n$.   
\end{lemma}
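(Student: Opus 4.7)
The approach I would take is the classical polynomial alternation (Chebyshev/Gauss-quadrature style) argument, proceeding by contradiction. Suppose $k < n$, and let $x_1 < x_2 < \cdots < x_k$ denote the sign change points of $g$. The key idea is to manufacture a polynomial whose sign pattern matches that of $g$, so that the product has a definite sign; the moment hypothesis will then force the product to vanish a.e., which is impossible.

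Concretely, I would define
\[
p(x) = (x - x_1)(x - x_2) \cdots (x - x_k),
\]
which is a polynomial of degree $k \leq n-1$, so it is a linear combination of $1, x, \ldots, x^{n-1}$. The moment hypothesis $\int_{\R} x^j g(x) \dd x = 0$ for $j = 0, 1, \ldots, n-1$ therefore gives
\[
\int_{\R} p(x) g(x) \dd x = 0.
\]
On the other hand, the observation recorded immediately before the lemma states exactly that $g(x)(x - x_1) \cdots (x - x_k)$ is either a.e. nonnegative on $\R$ or a.e. nonpositive on $\R$; that is, $p \cdot g$ has constant sign almost everywhere. Combined with the vanishing integral, this forces $p(x) g(x) = 0$ for a.e. $x \in \R$. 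Since $p$ vanishes only at the finitely many points $x_1, \ldots, x_k$, we conclude $g(x) = 0$ for a.e. $x$. But an a.e.-zero function has no sign change points in the sense of the given definition, contradicting the assumption $k \geq 1$.

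There is no real obstacle here: once one invokes the observation preceding the lemma, everything reduces to the standard moment argument. The only thing that requires a line of justification is the final step (passing from $pg = 0$ a.e. to $g = 0$ a.e.), which is immediate because $p$ has only finitely many zeros and hence $\{p = 0\}$ has Lebesgue measure zero. I would keep the proof to a short paragraph, emphasizing that it is the sign-constancy observation, not the moment computation, that carries the content of the lemma.
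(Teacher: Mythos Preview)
Your argument is correct and is essentially identical to the paper's proof: both proceed by contradiction, construct the polynomial $P(x)=(x-x_1)\cdots(x-x_k)$ of degree $k\le n-1$, use the moment conditions to get $\int Pg=0$, and then invoke the sign-constancy of $Pg$ to reach a contradiction. The only cosmetic difference is that the paper phrases the final contradiction as ``$h$ is not identically zero, hence $\int h\ne 0$,'' whereas you spell out why $g$ cannot be a.e.\ zero via $k\ge 1$.
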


\begin{proof}
We prove the lemma by contradiction. Assume that $k \leq n-1$. Let $x_1< x_2 < \ldots < x_k$ be the sign change points of $g$. From our assumption, for every polynomial $P$ of degree at most $n-1$ one has $\int Pg=0$. Let us take $P(x)=(x-x_1)\ldots(x-x_k)$ and  $h=Pg$. We have $\int h  = 0$. On the other hand, $h$ does not change sign since $P$ changes sign exactly at the same points as $g$. Since $h$ is not identically zero, we get $\int h \ne 0$,  contradiction. 
\end{proof}

\begin{cor}\label{cor:three-times}
Suppose $U,V$ are real random variables with densities $f_U$ and $f_V$, such that $\E U= \E V$ and $\E U^2= \E V^2$. Then the function $f_V-f_U$ changes sign at least three times.
\end{cor}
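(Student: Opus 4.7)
The plan is to apply Lemma~\ref{lem:interlacing} directly with $g = f_V - f_U$ and $n = 3$. The substance is in unpacking the three moment equalities that the hypotheses provide. Since $f_U$ and $f_V$ are probability densities, $\int g = \int f_V - \int f_U = 1 - 1 = 0$. The hypothesis $\E U = \E V$ gives $\int x\, g(x)\,\dd x = \E V - \E U = 0$, and $\E U^2 = \E V^2$ gives $\int x^2 g(x)\,\dd x = \E V^2 - \E U^2 = 0$. Thus $g$ is orthogonal to the monomials $1, x, x^2$, matching the hypothesis of Lemma~\ref{lem:interlacing} for $n = 3$.

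With these moment conditions in hand, I would split into cases on the number $k$ of sign change points of $g$. If $g$ has infinitely many sign change points, the claim is immediate. If $g$ has finitely many sign change points with $k \geq 1$, then Lemma~\ref{lem:interlacing} applied with $n = 3$ yields $k \geq 3$, as desired. The remaining possibility $k = 0$ falls outside the literal statement of the lemma, but is easy to dispose of: a function with no sign change points in the sense of the definition is of a single sign almost everywhere, and combined with $\int g = 0$ this forces $g \equiv 0$ a.e., i.e., $f_U = f_V$ a.e.\ --- a degenerate situation outside the scope of the corollary as it will be used.

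I do not foresee any real obstacle here; the corollary is essentially a repackaging of Lemma~\ref{lem:interlacing}, and the substantive work has already been done in its proof. The only content to add is the reading off of the three vanishing moments of $f_V - f_U$ from the hypotheses on $U$ and $V$.
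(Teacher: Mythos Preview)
Your proposal is correct and takes essentially the same approach as the paper: the paper's proof is the single line ``apply Lemma~\ref{lem:interlacing} with $g=f_U-f_V$ and $n=3$.'' Your version is more explicit about the moment identities and more careful about the edge cases (infinitely many sign changes, or $k=0$) that fall outside the literal hypotheses of the lemma, which is a reasonable addition.
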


\begin{proof}
It is enough to apply Lemma \ref{lem:interlacing} with $g=f_U-f_V$ and $n=3$.
\end{proof}

\section{Proof of Theorem \ref{prop:diagonal-chaos}}\label{sec:gaussy}

\begin{lemma}\label{lem:continuity}
Suppose that $Y$ is a random vector having values in $(-l, \infty)^n$, where $l \geq 0$. Let $\Phi: (-l \sqrt{n}, \infty) \to \R$ be continuous and assume that there exists a measurable function $M:(-l,\infty)^n \to \R$, such that one has $\sup_{x \in S_+^{n-1}}|\Phi(\scal{x}{y})| \leq M(y)$ and $\E M(Y)<\infty$. Then, the function $F:S_+^{n-1} \to \R$ defined by $F(x) = \E \Phi(\scal{x}{Y})$ is continuous.  
\end{lemma}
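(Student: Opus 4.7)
The plan is to apply Lebesgue's dominated convergence theorem directly. The lemma is essentially an instance of continuity of an integral with respect to a parameter, packaged in a form convenient for the rest of the paper.

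First I would verify that the quantity $F(x)$ is well-defined on $S_+^{n-1}$. For $x\in S_+^{n-1}$ and $y\in(-l,\infty)^n$ the Cauchy--Schwarz inequality gives $\sum_{i=1}^n x_i\leq \sqrt{n}\,|x|=\sqrt{n}$, and since $x_i\geq 0$ and $y_i>-l$ for all $i$, one obtains $\scal{x}{y}>-l\sqrt{n}$. Thus $\scal{x}{Y}$ almost surely lies in the domain of $\Phi$, so $\Phi(\scal{x}{Y})$ is a well-defined real random variable, and the pointwise domination hypothesis together with $\E M(Y)<\infty$ ensures it is integrable; hence $F(x)=\E\Phi(\scal{x}{Y})$ exists and is finite for every $x\in S_+^{n-1}$.

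Next, to check continuity at a fixed $x\in S_+^{n-1}$, I would take an arbitrary sequence $x_k\to x$ in $S_+^{n-1}$ and consider the random variables $Z_k=\Phi(\scal{x_k}{Y})$. For each outcome, the continuity of the inner product gives $\scal{x_k}{Y}\to \scal{x}{Y}$, and then the continuity of $\Phi$ on $(-l\sqrt{n},\infty)$ yields the pointwise convergence $Z_k\to \Phi(\scal{x}{Y})$ almost surely. The uniform-in-$x$ bound in the hypothesis gives the integrable domination $|Z_k|\leq M(Y)$ for all $k$. Applying the dominated convergence theorem concludes that $F(x_k)=\E Z_k\to \E\Phi(\scal{x}{Y})=F(x)$, establishing the continuity claim.

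There is no serious obstacle here; the only point that requires a brief remark is the domain issue addressed above (ensuring that $\scal{x}{y}$ never reaches $-l\sqrt{n}$, so that $\Phi$ is continuous at the limit point), which is precisely why the domain of $\Phi$ is taken to be $(-l\sqrt{n},\infty)$ rather than $(-l,\infty)$ in the hypothesis.
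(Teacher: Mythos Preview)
Your proof is correct and follows essentially the same route as the paper: verify that $\scal{x}{y}>-l\sqrt{n}$ so that $\Phi(\scal{x}{y})$ is well-defined, use continuity of $\Phi$ for pointwise convergence along a sequence $x_k\to x$, and apply Lebesgue's dominated convergence theorem with the dominating function $M(Y)$. The paper's version is slightly terser (it omits the explicit Cauchy--Schwarz justification of the domain condition), but the argument is identical.
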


\begin{proof}
We have
$
F(x) =  \int_{(-l,\infty)^n} \Phi(\scal{x}{y}) \dd \mu(y),
$ 
where $\mu$ is the law of $Y$. Suppose $x^{(n)} \to x$. Then, $\Phi(\scal{x^{(n)}}{y}) \to \Phi(\scal{x}{y})$ for any fixed $y \in (-l,\infty)^n$, by the continuity of $\Phi$ (note that $\scal{x}{y} > -l \sqrt{n}$). The assertion follows from the Lebesgue dominated convergence theorem.    
\end{proof}

\begin{lemma}\label{lem:wyrownywanie-wystarczy}
Suppose that $Y=(Y_1, \ldots, Y_n)$, where $Y_1, \ldots, Y_n$ are i.i.d. random variables having values in the interval $(-l, \infty)$, where $l \geq 0$. Let $\Phi: (-l \sqrt{n}, \infty) \to \R$ be such that the function $F(x)=\E \Phi(\scal{x}{Y})$ is continuous. Suppose that for every $0 \leq d_1 < d_2$ satisfying $d_1^2+d_2^2 \leq 1$  we have
\begin{equation}\label{eq:zblizanie-dla-phi-2}
\E \Phi(s+d_1 Y_1 + d_2Y_2) \leq \E \Phi\left(s+\sqrt{\frac12(d_1^2+d_2^2)}(Y_1 + Y_2) \right), 
\end{equation} 
for every  $s \geq - l \sqrt{(n-2)(1-d_1^2-d_2^2)}$. Then,
\begin{equation}\label{eq:phi-maximizer-2}
\E \Phi\left(\sum_{i=1}^n d_i Y_i \right) \leq \E \Phi\left(\frac{1}{\sqrt{n}}\sum_{i=1}^n Y_i \right)
\end{equation}
for all $(d_1,\ldots, d_n) \in S_+^{n-1}$. Moreover, for $(d_1, \ldots, d_n) \ne (n^{-1/2},\ldots, n^{-1/2})$, if \eqref{eq:zblizanie-dla-phi-2} is always strict, then the inequality \eqref{eq:phi-maximizer-2} is strict.
\end{lemma}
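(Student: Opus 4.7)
The plan is to lift the two-coordinate inequality \eqref{eq:zblizanie-dla-phi-2} to the $n$-coordinate inequality \eqref{eq:phi-maximizer-2} via a single ``pair-smoothing'' step combined with a clean sup-attaining argument based on continuity of $F$ and compactness of $S_+^{n-1}$.

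First I would carry out a single pair-smoothing step. Fix $d \in S_+^{n-1}$ and indices $i \ne j$ with $d_i < d_j$, set $S = \sum_{k \notin \{i,j\}} d_k Y_k$, and condition on the $\sigma$-algebra generated by $(Y_k)_{k \notin \{i,j\}}$. The key point is to verify the lower bound that licenses \eqref{eq:zblizanie-dla-phi-2}: since $Y_k > -l$ and $d_k \ge 0$ we have $S > -l \sum_{k \notin \{i,j\}} d_k$, and Cauchy--Schwarz gives
\[
\sum_{k \notin \{i,j\}} d_k \;\le\; \sqrt{(n-2) \sum_{k \notin \{i,j\}} d_k^2} \;=\; \sqrt{(n-2)(1 - d_i^2 - d_j^2)}.
\]
Applying \eqref{eq:zblizanie-dla-phi-2} pointwise with $s = S$ and integrating by Fubini yields $F(d) \le F(T_{ij} d)$, where $T_{ij} d$ denotes the vector obtained from $d$ by replacing both $d_i$ and $d_j$ by their quadratic mean $\sqrt{(d_i^2 + d_j^2)/2}$. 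Note that $T_{ij} d$ still lies in $S_+^{n-1}$.

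Next I would conclude via a sup-attaining argument that avoids any explicit iteration. By continuity of $F$ on the compact set $S_+^{n-1}$, the supremum $M := \sup_{S_+^{n-1}} F$ is attained on a nonempty compact set $A = \{d \in S_+^{n-1} : F(d) = M\}$, and the previous step shows $T_{ij}(A) \subseteq A$ for every pair $i \ne j$. Consider the auxiliary functional $\phi(d) = \sum_{k=1}^n (d_k^2 - 1/n)^2$, which is continuous on $A$ and therefore attains its minimum at some $d^* \in A$. Since averaging two real numbers never increases the sum of their squared deviations from any fixed target, $\phi(T_{ij} d) \le \phi(d)$ with strict inequality whenever $d_i \ne d_j$. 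Hence if any two coordinates of $d^*$ differed, applying the corresponding $T_{ij}$ would produce an element of $A$ with strictly smaller $\phi$, contradicting minimality. Therefore $d^* = (n^{-1/2},\ldots,n^{-1/2})$ and \eqref{eq:phi-maximizer-2} follows.

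For the ``moreover'' part, if \eqref{eq:zblizanie-dla-phi-2} is always strict and $d \ne (n^{-1/2},\ldots,n^{-1/2})$, then some pair $i \ne j$ satisfies $d_i \ne d_j$, and the single-step inequality now becomes strict, yielding $F(d) < F(T_{ij} d) \le F(n^{-1/2},\ldots,n^{-1/2})$ by combining with the non-strict conclusion already proved. The main obstacle I expect is the conditioning step: one must carefully justify Fubini and verify the almost-sure lower bound on $S$ so that \eqref{eq:zblizanie-dla-phi-2} can be legitimately applied on the conditioning event; the rest of the argument is essentially a tidy application of continuity, compactness, and the variance-reducing nature of pair-averaging.
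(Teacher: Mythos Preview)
Your proof is correct and follows essentially the same route as the paper: a single pair-smoothing step justified by the same Cauchy--Schwarz bound on $S$, then continuity and compactness to obtain the argmax set $A$, then an auxiliary continuous functional on $A$ that is strictly monotone under $T_{ij}$ to force the extremizer to have all coordinates equal. The only difference is cosmetic: the paper maximizes $g(d)=\sum_k d_k$ on $A$ (which strictly increases under $T_{ij}$ when $d_i\ne d_j$), whereas you minimize $\phi(d)=\sum_k (d_k^2-1/n)^2$ (which strictly decreases); either choice works and the remaining structure of the argument is identical.
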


\begin{proof}
We first show that if $d_1, d_2 \geq 0$ satisfy $d_1^2+d_2^2=\delta^2$ and $s \geq - l \sqrt{(n-2)(1-\delta^2)}$, then $\E \Phi \left( s + d_1 Y_1 + d_2 Y_2\right)$ is well-defined, that is, $s+d_1 Y_1 + d_2 Y_2 > -l \sqrt{n}$ is a.s. satisfied.  Indeed,
\begin{align*}
	s+ d_1 Y_1 + d_2 Y_2 & > -l \sqrt{(n-2)(1 - \delta^2)} - l(d_1+d_2) 
	 \\ & \geq -l\sqrt{(n-2)(1 - \delta^2)} -l\sqrt{2(d_1^2+d_2^2)}  \\
	& = -l\left( \sqrt{n-2} \cdot \sqrt{1-\delta^2} + \sqrt{2} \delta \right)  
	  \\ & \geq - l \sqrt{n-2+2} \cdot \sqrt{1-\delta^2 + \delta^2} = - l \sqrt{n}, 
\end{align*}
where the last inequality results from the Cauchy-Schwarz inequality. 

We now consider a random variable $S=\sum_{i=3}^n d_i Y_i$, where $\sum_{i=3}^n d_i^2=1-\delta^2$. Observe that a.s.
\[
	S > - l \sum_{i=3}^n d_i \geq -l \sqrt{(n-2) \sum_{i=3}^n d_i^2} = -l \sqrt{(n-2) (1-\delta^2)},  
\] 
again by Cauchy-Schwarz inequality. Substituting $S$ for $s$ in \eqref{eq:zblizanie-dla-phi-2} and taking expectation with respect to $Y_3, \ldots, Y_n$ leads to the inequality
\[
	\E \Phi\left( \sum_{i=1}^n d_i Y_i \right) \leq \E \Phi\left( \sqrt{\frac12(d_1^2+d_2^2)}(Y_1 + Y_2) + \sum_{i=3}^n d_i Y_i \right),
\]
where $(d_1, \ldots, d_n) \in S_+^{n-1}$.
Moreover, if  \eqref{eq:zblizanie-dla-phi-2} is strict for every $s$, then the above inequality is also strict. 

Since the function $F$ is continuous and $S_+^{n-1}$ is compact, $F$ achieves its maximum in some point $(d_1, \ldots, d_n) \in S_+^{n-1}$. We first show the \emph{moreover} part of the assertion. If  \eqref{eq:zblizanie-dla-phi-2} is strict,  then for $d_1 \ne d_2$ we have 
\begin{equation}\label{eq:F}
F(d_1, d_2, d_3, \ldots, d_n) <  F\left(\sqrt{(d_1^2+d_2^2)/2}, \sqrt{(d_1^2+d_2^2)/2}, d_3,\ \ldots, d_n\right).
\end{equation}
Now, if $(d_1, \ldots, d_n)$ has two different coordinates, then using permutation invariance of $F$ we can assume that these coordinates are $d_1 \ne d_2$ and \eqref{eq:F} immediately gives a contradiction.  

We now show the first part. Let $A \subseteq S_+^{n-1}$ be the set where the maximum $m$ of $F$ is achieved. Since $F$ is continuous, this set is compact. The function $g:A \to \R$ defined by $g(x_1, \ldots, x_n) = x_1+\ldots+x_n$ achieves its maximum on $A$. We claim that the point where the maximum is achieved must be the point $(n^{-1/2},\ldots, n^{-1/2})$. If it is some other point $(d_1,\ldots, d_n)$, then, without loss of generality, $d_1 \ne d_2$ and we observe that
\begin{align*}
 m & = F(d_1, d_2, d_3, \ldots, d_n) \\ & \leq   F\left(\sqrt{(d_1^2+d_2^2)/2}, \sqrt{(d_1^2+d_2^2)/2}, d_3,\ \ldots, d_n\right) \leq m.
\end{align*}
Thus, $\left(\sqrt{(d_1^2+d_2^2)/2}, \sqrt{(d_1^2+d_2^2)/2}, d_3,\ \ldots, d_n\right) \in A$ and clearly the sum of coordinates of this vector is strictly bigger than the sum of coordinates of $(d_1, \ldots, d_n)$, which gives a contradiction with the fact that the latter vector maximized $g$ on $A$.
\end{proof}

\begin{lemma}\label{lem:technical-density-bessel}
Assume that $X_1, X_2$ are i.i.d.\ Gamma$(\alpha)$ real random variables, let $d_1,d_2>0,c_1 = \Gamma(\alpha)^{-2} (d_1 d_2)^{-\alpha}, c_2=\sqrt{(d_1^2+d_2^2)/2}$ and define $I_{d_1, d_2}(x) = \int_0^1 [t(1-t)]^{\alpha-1} e^{-x\left(\frac{t}{d_1}+\frac{1-t}{d_2}\right)}\dd t$. Then,
\begin{itemize}
\item[(a)] the density of $d_1 Y_1 + d_2 Y_2$, where $Y_i=X_i-\alpha$, $i=1,2$, is equal to
\[
	f_{d_1, d_2}(x) =
	c_1(x+\alpha(d_1+d_2))^{2\alpha-1} I_{d_1, d_2}(x+\alpha(d_1+d_2)) \1_{[-\alpha(d_1+d_2),\infty)}(x), 
\]
\item[(b)] the density of $\sqrt{\frac12\left(d_1^2+d_2^2\right)}( Y_1 + Y_2)$, where $Y_i=X_i-\alpha$, $i=1,2$, is equal to
\[
	g_{d_1, d_2}(x) = \frac{1}{\Gamma(2\alpha)c_2^{2\alpha}}\Big(x+\alpha\sqrt{2(d_1^2+d_2^2)}\Big)^{2\alpha-1}e^{-\left(\frac{x}{c_2}+2\alpha\right)} \1_{[-\alpha\sqrt{2(d_1^2+d_2^2)},\infty)}(x).
\]
\end{itemize}
\end{lemma}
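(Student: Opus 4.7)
The plan is to compute both densities by a direct convolution argument applied to the unshifted Gamma variables $X_1, X_2$, and then translate by $-\alpha(d_1+d_2)$ (respectively, by $-\alpha\sqrt{2(d_1^2+d_2^2)}$) to account for the centering $Y_i = X_i - \alpha$. No deep idea beyond the standard substitution producing Beta-type integrals out of Gamma convolutions is required.

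For part (a), each $X_i$ has density $\Gamma(\alpha)^{-1} x^{\alpha-1} e^{-x}$ on $(0,\infty)$, so by rescaling, $d_i X_i$ has density $\Gamma(\alpha)^{-1} d_i^{-\alpha} x^{\alpha-1} e^{-x/d_i}$ on the same support. Convolving, for $z > 0$ the density of $Z = d_1 X_1 + d_2 X_2$ equals
\[
\frac{1}{\Gamma(\alpha)^2 (d_1 d_2)^{\alpha}} \int_0^z u^{\alpha-1}(z-u)^{\alpha-1} e^{-u/d_1 - (z-u)/d_2} \, \dd u.
\]
The substitution $u = tz$ with $t \in (0,1)$ then factors out $z^{2\alpha - 1}$ and converts the remaining integral into exactly $I_{d_1, d_2}(z)$, giving $f_Z(z) = c_1 z^{2\alpha-1} I_{d_1, d_2}(z) \1_{(0,\infty)}(z)$. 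Since $d_1 Y_1 + d_2 Y_2 = Z - \alpha(d_1+d_2)$, replacing $z$ by $x + \alpha(d_1 + d_2)$ shifts the support to $[-\alpha(d_1+d_2),\infty)$ and yields the claimed formula for $f_{d_1, d_2}$.

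For part (b), I would appeal directly to the classical identity $X_1 + X_2 \sim \mathrm{Gamma}(2\alpha)$, whose density is $\Gamma(2\alpha)^{-1} y^{2\alpha-1} e^{-y}$ on $(0,\infty)$. Rescaling by $c_2$ shows that $c_2(X_1 + X_2)$ has density $\Gamma(2\alpha)^{-1} c_2^{-2\alpha} y^{2\alpha-1} e^{-y/c_2}$ on $(0,\infty)$; translating by $-2\alpha c_2$ then produces the density of $c_2(Y_1 + Y_2)$. The identity $2c_2 = \sqrt{2(d_1^2+d_2^2)}$ rewrites the power base $x + 2\alpha c_2$ as $x + \alpha\sqrt{2(d_1^2+d_2^2)}$ and the exponent $-(x + 2\alpha c_2)/c_2$ as $-(x/c_2 + 2\alpha)$, matching $g_{d_1, d_2}$ precisely.

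The main obstacle is only routine bookkeeping: tracking the shifted supports correctly and verifying the algebraic identity $2\alpha c_2 = \alpha\sqrt{2(d_1^2+d_2^2)}$ used in part (b). Everything else is mechanical once one observes that $d_1 X_1 + d_2 X_2$ is a sum of two independent Gamma variables with (different) scale parameters, while $c_2 X_1 + c_2 X_2$ collapses to a single Gamma via the shape additivity $\mathrm{Gamma}(\alpha)\ast\mathrm{Gamma}(\alpha) = \mathrm{Gamma}(2\alpha)$.
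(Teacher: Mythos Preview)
Your proposal is correct and follows essentially the same approach as the paper: for (a) you compute the convolution of the two scaled Gamma densities, use the substitution $u=tz$ to extract the factor $z^{2\alpha-1}$ and produce $I_{d_1,d_2}(z)$, then shift; for (b) you invoke $X_1+X_2\sim\mathrm{Gamma}(2\alpha)$ and apply the affine map $y\mapsto c_2 y - 2\alpha c_2$. The paper does exactly this, with only cosmetic differences in how the convolution integral is set up.
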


\begin{proof}
Let $\rho_{d_1, d_2}$ be the density of $d_1 X_1+ d_2 X_2$. Clearly $\rho_{d_1, d_2}$  is supported in $[0,\infty)$. The density of $X_1$ is equal to $f(u)=\frac{1}{\Gamma(\alpha)}u^{\alpha-1}e^{-u}\1_{[0,\infty)}(u)$. As a consequence, for $x \geq 0$ we have 
\begin{align*}
	\rho_{d_1, d_2}(x) 
	  & = \frac{1}{d_1 d_2}\int f\left( \frac{u}{d_1}\right) f\left( \frac{x-u}{d_2}\right) \dd u \\
	  & = \frac{1}{\Gamma(\alpha)^2 (d_1 d_2)^\alpha} \int_0^x u^{\alpha-1} (x-u)^{\alpha-1} e^{-\left(\frac{u}{d_1}+\frac{x-u}{d_2}\right)} \dd u \\
	  & \overset{u=xt}{=} c_1 \int_0^1 (xt)^{\alpha-1} (x(1-t))^{\alpha-1} e^{-x\left(\frac{t}{d_1}+\frac{1-t}{d_2}\right)} x\dd t \\
	  & = c_1 x^{2\alpha-1} \int_0^1 [t(1-t)]^{\alpha-1} e^{-x\left(\frac{t}{d_1}+\frac{1-t}{d_2}\right)} \dd t \\
	  & = c_1 x^{2\alpha-1}I_{d_1, d_2}(x).
\end{align*}
By shifting we obtain $f_{d_1, d_2}$. To get the formula for $g_{d_1, d_2}$ note that $X_1 + X_2 \sim \text{Gamma}(2\alpha)$ and apply a suitable affine map.
\end{proof}

\begin{lemma}\label{lem:3-times}
Assume that $X_1, X_2$ are i.i.d.\ Gamma$(\alpha)$ real random variables with $\alpha \in (0, 1/2]$  and let $d_1,d_2 \geq 0$ be such that $d_1 \ne d_2$. Let $f_{U}$ be the density of $U=d_1 Y_1 + d_2 Y_2$, where $Y_i=X_i-\alpha$, $i=1,2$ and let $f_{V}$ be the density of $V=\sqrt{\frac12(d_1^2+d_2^2)}( Y_1 + Y_2)$. Then there exist points $x_0<x_1<x_2$ such that $f_{V}-f_{U}$  is strictly positive on $(-\alpha\sqrt{2(d_1^2+d_2^2)}, x_0) \cup(x_1,x_2)$ and strictly negative on $(x_0,x_1) \cup (x_2, \infty)$. Moreover, $x_0=-\alpha(d_1+d_2)$.
\end{lemma}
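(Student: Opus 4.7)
The plan is to combine Lemma \ref{lem:technical-density-bessel} with a concavity argument for the log-ratio of densities, and then to pit the resulting upper bound on the number of sign changes against the lower bound furnished by Corollary \ref{cor:three-times}.

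Set $L_U = -\alpha(d_1+d_2)$ and $L_V = -\alpha\sqrt{2(d_1^2+d_2^2)}$, and assume WLOG $0 \leq d_1 < d_2$. Since $(d_1+d_2)^2 < 2(d_1^2+d_2^2)$, we have $L_V < L_U$, so on $(L_V, L_U)$ one has $f_V > 0 = f_U$ and consequently $f_V - f_U > 0$ there; in particular, the first sign change of $f_V - f_U$ cannot occur before $L_U$. For $x > L_U$ set $y = x - L_U > 0$ and $\delta = L_U - L_V > 0$. The formulas of Lemma \ref{lem:technical-density-bessel} give, after absorbing constants,
\[
	\rho(y) := \ln f_V(x) - \ln f_U(x) = C + (2\alpha - 1)\bigl[\ln(y+\delta) - \ln y\bigr] - \tfrac{y}{c_2} - \ln I_{d_1,d_2}(y),
\]
and the key claim is that $\rho$ is concave on $(0, \infty)$. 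The bracketed quantity has second derivative $y^{-2} - (y+\delta)^{-2} > 0$, so it is convex, and multiplication by the nonpositive factor $(2\alpha - 1)$---this is the only place where the hypothesis $\alpha \leq 1/2$ is used---yields a concave function; the middle term is linear; and $\ln I_{d_1,d_2}$ is convex by the Cauchy--Schwarz inequality $I_{d_1,d_2}'(y)^2 \leq I_{d_1,d_2}(y)\, I_{d_1,d_2}''(y)$ applied to the positive weight $[t(1-t)]^{\alpha-1}$ against the measure $e^{-y\psi(t)}\dd t$ with $\psi(t) = t/d_1 + (1-t)/d_2$.

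Next I would check that $\rho(y) \to -\infty$ as $y \to \infty$: since $\psi$ is minimized at $t = 0$ with value $1/d_2$, a Laplace-type estimate gives $I_{d_1,d_2}(y) \asymp y^{-\alpha} e^{-y/d_2}$, so $\rho(y) \sim y(1/d_2 - 1/c_2)$, and $d_2 > d_1$ forces $d_2 > c_2$, hence a strictly negative coefficient. A concave function tending to $-\infty$ at one endpoint of its domain has at most two distinct zeros, so $\rho$---and therefore $f_V - f_U$, since $f_U > 0$ throughout $(L_U, \infty)$---has at most two sign changes on $(L_U, \infty)$.

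Corollary \ref{cor:three-times}, applied with $\E U = \E V = 0$ and $\E U^2 = \E V^2 = \alpha(d_1^2+d_2^2)$, guarantees at least three sign changes of $f_V - f_U$ in total. The upper bound of two on $(L_U, \infty)$ therefore forces a sign change at the endpoint $L_U$ itself, so $x_0 = L_U$, and the two remaining sign changes must lie inside $(L_U, \infty)$ as some $x_1 < x_2$. Concavity of $\rho$, its $-\infty$ limit at $+\infty$, and the fact that $f_V - f_U$ must turn negative immediately to the right of $L_U$ (otherwise no sign change could occur there) then pin down the sign pattern: $\rho > 0$ on $(x_1, x_2)$ and $\rho < 0$ on $(L_U, x_1) \cup (x_2, \infty)$. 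I expect the only real obstacle to be the concavity of $\rho$; once the log-convexity of $I_{d_1,d_2}$ is recognized as the standard Cauchy--Schwarz inequality for moment-generating-type integrals, the rest is bookkeeping. The edge case $d_1 = 0$ can be dispatched either by a limiting argument or directly, since $f_U$ then reduces to a shifted Gamma density for which an analogous concavity analysis goes through.
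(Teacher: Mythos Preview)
Your approach is essentially the paper's: both arguments show $f_V-f_U>0$ on $(L_V,L_U)$ by support comparison, then prove that $\ln f_V-\ln f_U$ is strictly concave on $(L_U,\infty)$ via the very same decomposition (the $(2\alpha-1)$-weighted log-difference is concave because $\alpha\le 1/2$, and $\ln I_{d_1,d_2}$ is convex by H\"older/Cauchy--Schwarz), and finally invoke Corollary~\ref{cor:three-times} to force exactly three sign changes. The one genuine difference is how you pin down the sign change at $x_0=L_U$: the paper verifies directly that $f_V-f_U$ is negative just to the right of $L_U$ (via $f_U\to\infty$ when $\alpha<1/2$ and an explicit inequality when $\alpha=1/2$), whereas you deduce it by counting---at most two sign changes on $(L_U,\infty)$ plus at least three overall forces one at $L_U$ and hence $\rho<0$ near $0^+$. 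Your route is arguably cleaner, since it sidesteps the separate $\alpha=1/2$ computation; on the other hand, the paper's direct check makes the strictness of the concavity less load-bearing. For the edge case $d_1=0$ the paper does the direct analogous computation (the density $f_U$ is then an explicit shifted Gamma$(\alpha)$ density and the same concavity argument goes through); a limiting argument would require some care with the densities, so the direct route is safer there.
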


\begin{proof}
Suppose first that $d_1, d_2 \ne 0$. Since $d_1 \ne d_2$, we have $\alpha\sqrt{2(d_1^2+d_2^2)}>\alpha(d_1+d_2)$. Therefore, the support of $d_1 Y_1 + d_2 Y_2$ is strictly contained in the support of $\sqrt{\frac12(d_1^2+d_2^2)}( Y_1 + Y_2)$. Thus, $f_{V}(x)>0 = f_{U}(x)$ on the interval $(-\alpha\sqrt{2(d_1^2+d_2^2)}, -\alpha(d_1+d_2))$, so on this interval one has $f_{V} - f_{U}>0$. For $\alpha \in (0, 1/2)$ we now observe that $\lim_{x\rightarrow x_0^+}f_U(x) = \infty$ and for $\alpha=1/2$ we calculate 
\begin{align*}
(f_{V} - f_{U})(-\alpha(d_1+d_2)) &=  \sqrt{\frac{2}{d_1^2+d_2^2}} \exp \left(\frac{d_1+d_2}{\sqrt{2(d_1^2+d_2^2)}} -1 \right) - \frac{1}{\sqrt{d_1 d_2}}  \\
& < \sqrt{\frac{2}{d_1^2+d_2^2}} - \frac{1}{\sqrt{d_1d_2}} < 0.
\end{align*}
Since both $f_{V}$ and $f_{U}$ are continuous on $(-\alpha(d_1+d_2),\infty)$, we see that $f_{V} - f_{U}$ is negative at least on some right neighborhood of $-\alpha(d_1+d_2)$. As a consequence, $x_0=-\alpha(d_1+d_2)$ is the first sign change point. 

We now argue that in $(x_0,\infty)$ there are at most two sign change points. 
Indeed, the sign of $f_{V} - f_{U}$ is the same as the sign of $\ln f_{V} - \ln f_{U}$. The functions $\ln f_{V}$ and $\ln f_{U}$, defined on $(x_0,\infty)$, are of the form 
\[
    \ln f_{V}(x) = (2\alpha-1)\ln\left(x+\alpha\sqrt{2(d_1^2+d_2^2)}\right) + a(x),
\]
\[
    \ln f_{U}(x) = C + (2\alpha-1)\ln\bigg(x+\alpha(d_1+d_2)\bigg) + \ln I_{d_1, d_2}(x+\alpha(d_1+d_2)),
\]
with $a(x)$ being an affine function and $C$ being a constant. Let us prove that $\ln f_{V} - \ln f_{U}$ is strictly concave. The difference 
\[
    (2\alpha-1)\ln\Big(x+\alpha\sqrt{2(d_1^2+d_2^2)}\Big) - (2\alpha-1)\ln\Big(x+\alpha(d_1+d_2)\Big)
\]
is concave as its second derivative is equal to 
\[
    (2\alpha-1)\left(-\frac{1}{\Big(x+\alpha\sqrt{2(d_1^2+d_2^2)}\Big)^2} + \frac{1}{\Big(x+\alpha(d_1+d_2)\Big)^2}\right),
\]
which is nonpositive because $\alpha\sqrt{2\left(d_1^2+d_2^2\right)} > \alpha\left(d_1+d_2\right)$ and $2\alpha-1\leq0$.
The function $u(x)=\ln I_{d_1, d_2}(x)= \ln\left(\int_0^1 [t(1-t)]^{\alpha-1} e^{-x\left(\frac{t}{d_1}+\frac{1-t}{d_2}\right)}\dd t\right)$ is strictly convex (the inequality $I_{d_1, d_2}(\lambda x + (1-\lambda)y ) < I_{d_1, d_2}(x)^\lambda I_{d_1, d_2}(y)^{1-\lambda}$ for $x \ne y$ follows from the H\"older inequality). Hence, $\ln f_{V} - \ln f_U$ is a sum of an affine function, a concave function and a composition of a strictly concave function with an affine function. Thus, it is a strictly concave function. As a consequence, it cannot have more than two zeros in $(x_0,\infty)$, which means that we can find appropriate points $x_1$ and $x_2$. Altogether, the number of sign change points of $f_{V} - f_{U}$ does not exceed three.     

If, say, $d_1=0$ and $d_2>0$, then one easily checks that
\[
	f_U(x) = C_U (x+\alpha d_2)^{\alpha-1}e^{-(x/d_2+\alpha)}  \1_{[-\alpha d_2,\infty)}(x)
\]
and
\[ 
    f_{V}(x) = C_V \left(x+\alpha d_2 \sqrt2\right)^{2\alpha-1}e^{-\left(x\sqrt{2}/d_2+2\alpha\right)} \1_{\left[-\alpha d_2 \sqrt{2},\infty\right)}(x),
\]
where $C_U, C_V$ are some constants. Again, $f_{V}-f_{U}$ is positive on the interval $(-\alpha d_2 \sqrt{2}, -\alpha d_2)$ and negative on some right neighborhood of $x_0=-\alpha d_2$. That is because $\lim_{x \to x_0^+} f_{U}(x)=+\infty$ for $\alpha\in(0,1/2]$. On $(-\alpha d_2, \infty)$ the function $\ln f_{V}- \ln f_{U}$ can be written as
\[
    a(x) + (2\alpha-1)\ln(x+\alpha d_2 \sqrt2) - (2\alpha-1)\ln(x+\alpha d_2) + \alpha\ln(x+\alpha d_2)
\]
for some affine function $a(x)$. Similarly to the previous case, two middle components form a concave function and hence $\ln f_{V}- \ln f_{U}$ is again strictly concave. Thus, it can have at most two sign changes in $(-\alpha d_2,\infty)$, and therefore at most three sign changes in the whole real line.  

Since $\E U = \E V$ and $\E U^2 = \E V^2$, the function $g_{d_1, d_2}-f_{d_1, d_2}$, by Corollary \ref{cor:three-times}, changes sign at at least three points. Thus, it has  precisely three sign change points.
\end{proof}

We are now ready to prove Theorem \ref{prop:diagonal-chaos}. A scheme of the proof is illustrated in Figure \ref{fig:1}.

\begin{figure}[h]
\begin{tikzpicture}[>=latex,rect/.style={draw=black, 
                   rectangle, 
                   fill=white,
                   fill opacity = 0.2,
                   text opacity=1,
                   minimum width=20pt, 
                   minimum height = 30pt, 
                   align=center}]
  \node[rect] (a1) {Lemma \ref{lem:interlacing} };
  \node[rect,above=20pt of a1] (a2) {Corollary \ref{cor:three-times}};
  \node[rect,above=20pt of a2] (a3) {Lemma \ref{lem:3-times} };
  \node[rect,right=20pt of a2] (a4) {Lemma \ref{lem:technical-density-bessel} }; 
  \node[rect,above right=50pt of a3] (a5) {Theorem \ref{prop:diagonal-chaos} };
  \node[rect,right=30pt of a3] (a6) {Lemma \ref{lem:method-of-int-dens} };
  \node[rect,right=30pt of a6] (a9) {Lemma \ref{lem:continuity}  \\ Lemma \ref{lem:wyrownywanie-wystarczy} }; 
  \node[rect,right=30pt of a9] (a7) {Lemma \ref{lem:entropy-standard-bound}(b)};
  \node[rect,below=30pt of a7] (a8) {Lemma \ref{lem:entropy-standard-bound}(a)};
  \draw[->] (a1)--(a2)node[midway]{};
  \draw[->] (a2)--(a3)node[midway]{};
  \draw[->] (a3)--(a5)node[midway]{};
  \draw[->] (a4.north)--(a3)node[midway]{};  
  \draw[->] (a6.north)--(a5)node[midway]{};
  \draw[->] (a8)--(a7)node[midway]{};
  \draw[->] (a7.north)--(a5)node[midway]{};
  \draw[->] (a9.north)--(a5)node[midway]{};
\end{tikzpicture}
\caption{Scheme of the proof of Theorem 1.}\label{fig:1}
\end{figure}
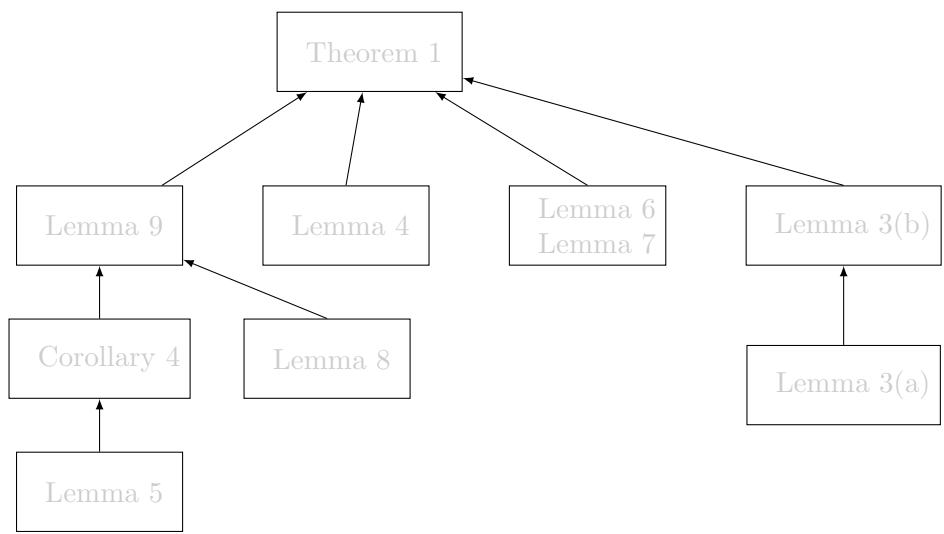

\begin{proof}[Proof of Theorem \ref{prop:diagonal-chaos}]
Since for any random variable $X$ and any real number $s$ we have $h(s+X)=h(X)$, we can replace the random variables $X_i$ with $Y_i=X_i-\E X_i = X_i - \alpha$. 
Proof can be reduced to $\alpha\in(0,1/2]$ case as sum of $k$ independent copies of Gamma$(\alpha)$ random variable follows Gamma$(k\alpha)$ distribution. 
Indeed, if we show that equal weights in a weighted sum of $n' = nk$ independent Gamma$(\alpha)$ random variables maximise the entropy, then the same result for the weights in a weighed sum of $n$ independent Gamma$(k\alpha)$ random variables will follow. Note that the assumption $n' \geq 1/\alpha$, needed to use the case of $n'$ summands and Gamma$(\alpha)$ random variables, is equivalent to $n \geq \frac{1}{k\alpha}$, which is the assumption for $n$ summands and Gamma$(k\alpha)$ random variables.  
By Lemma \ref{lem:entropy-standard-bound}(b), Lemma \ref{lem:continuity} and Lemma \ref{lem:wyrownywanie-wystarczy}, it is enough to show that 
\begin{itemize}
\item[(a)]
for $0 \leq d_1 <  d_2$ satisfying $d_1^2+d_2^2 \leq 1$ one has 
\[
	\E \Phi_n (s+d_1 Y_1 + d_2 Y_2) < \E \Phi_n \left(s+ \sqrt{(d_1^2+d_2^2)/2}(Y_1 + Y_2) \right)
\] 
for every $s \geq -\alpha \sqrt{(n-2)(1-d_1^2-d_2^2)}$ (note that in our case $l=\alpha$), where on $(-\alpha\sqrt{n},\infty)$ we have $\Phi_n = - \ln p_n$ and $p_n$ is the density of $\frac{1}{\sqrt{n}} \sum_{i=1}^n Y_i = \frac{1}{\sqrt{n}}( \text{Gamma}(n\alpha)-n\alpha)$, 
\item[(b)] there exists a measurable function $M:(-\alpha,\infty)^n \to \R$ such that $\sup_{x \in S_+^{n-1}} |\Phi_n(\scal{x}{y})| \leq M(y)$ and $\E M(Y_1,\ldots, Y_n) < \infty$.
\end{itemize}

We first verify (a). The density $q_n$ of Gamma$(n\alpha)$ is equal to 
\[
q_n(x) = \frac{1}{\Gamma(n\alpha)} x^{n\alpha-1} e^{-x} \1_{[0,\infty)}(x),
\]
 hence the density of $\frac{1}{\sqrt{n}} (\text{Gamma}(n\alpha)-n\alpha)$ is equal to
\begin{align*}
	p_n(x) 
	  & = \sqrt{n}q_n(\sqrt{n}x+n\alpha) \\ 
	  & =  \frac{\sqrt{n}}{\Gamma(n\alpha)} (\sqrt{n}x+n\alpha)^{n\alpha-1} e^{-\sqrt{n}x - n\alpha} \1_{[0,\infty)}(\sqrt{n}x + n\alpha) \\
	  & =  \frac{n^{n\alpha/2}}{\Gamma(n\alpha)} (x + \alpha\sqrt{n})^{n\alpha  -1} e^{-\sqrt{n}x - n\alpha} \1_{[-\alpha\sqrt{n},\infty)}(x).
\end{align*}
Therefore, for $x \geq -\alpha \sqrt{n}$ we have
$
	\Phi_n(x) =  \left( 1 - n\alpha \right) \ln (x + \alpha\sqrt{n}) + \sqrt{n}x  + c_n,
$ 
where $c_n$ depends only on $n$. By Lemma \ref{lem:method-of-int-dens}, it is enough to show that the difference $f_V-f_U$ of the densities $f_U$ and $f_V$ of $U=s+d_1 Y_1 + d_2 Y_2$ and $V=s+\sqrt{(d_1^2+d_2^2)/2}(Y_1+Y_2)$, respectively, changes sign exactly three times and is positive a.e. before the first sign change point. This is guaranteed by Lemma \ref{lem:3-times}.    

Let us now show (b). The affine part of $\Phi_n$ can easily be handled, since for $x \in S_+^{n-1}$ we have $\abs{\scal{x}{y}} \leq |y|$ and $\E \abs{(Y_1,\ldots, Y_n)} \leq \sum_{i=1}^n \E|Y_i| = n \E |Y_1|<\infty$. We have to bound $\abs{\ln(\scal{x}{y}+\alpha\sqrt{n})}$ for $y \in (-\alpha,\infty)^n$. Note that if $t \geq 1$, then $\abs{\ln t} = \ln t \leq t-1 \leq t$ and if $0<t<1$, then $\abs{\ln t} = -\ln t=b \ln (t^{-1/b}) \leq b(t^{-1/b}-1) \leq b t^{-1/b}$ for any $b>0$. Thus, $\abs{\ln t} \leq t+ b t^{-1/b}$ for every $t,b>0$ and we get
\[
	\abs{\ln(\scal{x}{y}+\alpha\sqrt{n})} \leq \abs{\scal{x}{y}} + \alpha\sqrt{n} + b (\scal{x}{y}+\alpha\sqrt{n})^{-1/b}.
\]
The term $\abs{\scal{x}{y}}$ can be bounded as above and $\alpha\sqrt{n}$ is a finite constant. We are therefore left with the term $(\scal{x}{y}+\alpha\sqrt{n})^{-1/b}$. 

Suppose that  $ \min_i x_i \le 1/(2 \sqrt{n})$, say, $x_1 \leq 1/(2 \sqrt{n})$. Then 
\[
\sum_{i=1}^n x_i \leq x_1 + \sqrt{n-1}(1-x_1^2) \leq \frac{1}{2 \sqrt{n}} + \sqrt{n-1}. 
\] 
Define $c_n = \sqrt{n}-\sqrt{n-1}- \frac{1}{2 \sqrt{n}}$. We have $c_n>\sqrt{n}-\sqrt{n-1}- \frac{1}{\sqrt{n-1}+ \sqrt{n}}=0$. Hence, $\scal{x}{y}+\alpha\sqrt{n} \geq \alpha\sqrt{n}-\alpha\sum_{i} x_i  \geq \alpha c_n$, so $(\scal{x}{y}+\alpha\sqrt{n})^{-1/b} \leq c_n^{-1/b}$, uniformly with respect to $y$. 

If for all $i$ we have $x_i > 1/(2\sqrt{n})$, then 
\begin{align*}
  (\scal{x}{y}+\alpha\sqrt{n})^{-1/b} & \leq \left(\sum_{i=1}^n x_i(y_i+\alpha) \right)^{-1/b} \leq (2\sqrt{n})^{1/b} \left(\sum_{i=1}^n (y_i+\alpha) \right)^{-1/b} \\ & \leq (2\sqrt{n})^{1/b} (y_1+\alpha)^{-1/b}.
\end{align*}
Now we set $b=2/\alpha$. Since 
\begin{align*}
    \E(Y_1+\alpha)^{-1/b} = \E X_1^{-\alpha/2} = \frac{1}{\Gamma(\alpha)}\int_0^\infty x^{-\alpha/2}x^{\alpha-1}e^{-x}\dd x = \frac{\Gamma(\alpha/2)}{\Gamma(\alpha)}<\infty,
\end{align*}the proof is completed.
\end{proof}

\section{Channel capacities}\label{sec:capacity}

Consider a memoryless transmission channel with power budged $P$ subject to additive noise $N$. If $X$ is an input of the channel then the output produced by the channel at the receiver is $Y=X+N$, where $X$ and $N$ are independent. The \emph{capacity} of the channel is given by the famous channel coding theorem of Shannon (see \cite{S48}):
\[
	C_P(N) = \sup_{X: \ \var(X) \leq P} (h(X+N) - h(N)).
\]  
Let $P_N = \var(N)$ be the noise power. Shannon (see \cite{S48}, Theorem 18) gave the following bounds 
\begin{equation}\label{eq:shannon}
	\frac12 \ln\left( 1+\frac{P}{\mc{N}(N)} \right) \leq C_P(N) \leq \frac12 \ln\left( \frac{P+P_N}{\mc{N}(N)} \right),
\end{equation}
where $\mc{N}(N)=\frac{1}{2\pi e} \exp(2h(N))$ is the so-called \emph{entropy power}. It is straightforward to check that the right inequality follows from the fact that Gaussian densities maximize entropy under fixed variance, which can equivalently be written as $h(Y)\leq \frac12 \ln(2\pi e \var(Y))$ or $\mc{N}(Y) \leq \var(Y)$. This gives  
\begin{align*}
h(X+N)-h(N) & = \frac12 \ln\left( \frac{\mc{N}(X+N)}{\mc{N}(N)} \right) \\ & \leq \frac12 \ln\left( \frac{\var(X+N)}{\mc{N}(N)} \right)  =\frac12 \ln\left( \frac{P+P_N}{\mc{N}(N)} \right).
\end{align*}
The left inequality is a consequence of the entropy power inequality, namely
\begin{align*}
	\sup_{X: \ \var(X) \leq P} (h(X+N) - h(N)) & = \frac12 \sup_{X: \ \var(X) \leq P} (\ln \mc{N}(X+N) - \ln \mc{N}(N)) \\ & \geq \frac12 \sup_{X: \ \var(X) \leq P} (\ln (\mc{N}(X)+\mc{N}(N)) - \ln \mc{N}(N)) \\
	& =  \frac12  (\ln (P+\mc{N}(N)) - \ln \mc{N}(N)) \\ & = \frac12 \ln\left( 1+\frac{P}{\mc{N}(N)} \right).
\end{align*}

Suppose that the noise $N$ is of the form $N=\sum_{i=1}^n a_i X_i$, where $X_i \sim \textrm{Gamma}(\alpha)$. In other words, our noise has $n$ independent sources, each having gamma distribution with certain \emph{scale} $a_i$ and \emph{shape} $\alpha$. Theorem \ref{prop:diagonal-chaos} allows us to estimate capacity of this channel. Namely, we have the following corollary.

\begin{cor}\label{cor:capacity}
Let $n \geq 1/\alpha$. The capacity of the additive channel with power budget $P$ and noise of the form $N=\sum_{i=1}^n a_i X_i$, where $X_i \sim \textrm{Gamma}(\alpha)$,  and with noise power $P_N$ satisfies  
\[
	\frac12 \ln \left(1+ \frac{P}{P_N} \cdot \tau(n \alpha) \right) \leq C_P(N) \leq \frac12 \ln \left(\frac{P+P_N}{P_N }\tau(\alpha)  \right)
\]
with 
\[
\tau(x)=\mc{N}(x^{-\frac12}\textrm{Gamma}(x))^{-1} = 2\pi e \exp(-2x-2 \ln \Gamma(x)-2(1-x) \psi(x)+ \ln x), 
\]
where $\psi$ denotes the digamma function. The upper bound  holds for every $n \geq 1$ and $\alpha>0$.
\end{cor}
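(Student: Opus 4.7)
The plan is to apply Shannon's bounds \eqref{eq:shannon} and reduce both estimates to a comparison between the entropy power $\mc{N}(N)$ and the noise power $P_N$. Writing $s=\bigl(\sum_j a_j^2\bigr)^{1/2}$ and $b_i = a_i/s$, we have $N = sM$ where $M = \sum_i b_i X_i$ satisfies $\sum_i b_i^2 = 1$ and $\var(M)=\alpha$. Since $\var(N)=s^2\alpha = P_N$, this gives $s=\sqrt{P_N/\alpha}$, and the scaling rule $\mc{N}(cZ)=c^2\mc{N}(Z)$ yields $\mc{N}(N)=(P_N/\alpha)\mc{N}(M)$. Thus the whole argument reduces to controlling $\mc{N}(M)/\alpha$. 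A direct computation of the differential entropy of $\textrm{Gamma}(x)$ gives $h(\textrm{Gamma}(x))=x+\ln\Gamma(x)+(1-x)\psi(x)$, and applying the scaling rule with $c=x^{-1/2}$ shows $\mc{N}\bigl(x^{-1/2}\textrm{Gamma}(x)\bigr)=1/\tau(x)$, which matches the explicit formula for $\tau$ stated in the corollary (note also that $x^{-1/2}\textrm{Gamma}(x)$ has variance $1$, consistent with $\tau(x)\ge 1$).

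For the upper bound, valid for every $n\ge 1$ and $\alpha>0$, we apply the linearized entropy power inequality recalled in the introduction:
\[
h(M) \;=\; h\Bigl(\sum_{i=1}^n b_i X_i\Bigr) \;\ge\; \sum_{i=1}^n b_i^2\, h(X_i) \;=\; h(X_1).
\]
Therefore $\mc{N}(M)\ge \mc{N}(X_1) = \alpha\,\mc{N}\bigl(\alpha^{-1/2}\textrm{Gamma}(\alpha)\bigr) = \alpha/\tau(\alpha)$, so $\mc{N}(N)\ge P_N/\tau(\alpha)$. Inserting this into the right-hand side of \eqref{eq:shannon} gives the claimed upper bound on $C_P(N)$.

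For the lower bound, assume $n\ge 1/\alpha$ and invoke Theorem \ref{prop:diagonal-chaos} to obtain
\[
h(M) \;\le\; h\Bigl(\tfrac{1}{\sqrt n}\sum_{i=1}^n X_i\Bigr) \;=\; h\bigl(n^{-1/2}\textrm{Gamma}(n\alpha)\bigr).
\]
Writing $n^{-1/2}\textrm{Gamma}(n\alpha) = \sqrt{\alpha}\cdot (n\alpha)^{-1/2}\textrm{Gamma}(n\alpha)$ and using the scaling rule once more, we find $\mc{N}\bigl(n^{-1/2}\textrm{Gamma}(n\alpha)\bigr) = \alpha/\tau(n\alpha)$, hence $\mc{N}(N)\le P_N/\tau(n\alpha)$. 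Plugging this into the left-hand side of \eqref{eq:shannon} yields the lower bound. The argument presents no genuine obstacle; the only care required is in tracking the scaling factors needed to convert between the unit-variance normalization implicit in the definition of $\tau$ and the variance-$\alpha$ normalization of $M$.
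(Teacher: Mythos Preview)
Your proof is correct and follows essentially the same route as the paper: you bound $\mc{N}(N)$ from below via the linearized entropy power inequality and from above via Theorem~\ref{prop:diagonal-chaos}, then plug these into Shannon's bounds~\eqref{eq:shannon}. The only cosmetic difference is that you normalize first to $M=\sum_i b_i X_i$ with $\sum b_i^2=1$, whereas the paper carries the factor $|a|$ through the computation; your additional verification of the explicit formula for $\tau(x)$ via $h(\textrm{Gamma}(x))=x+\ln\Gamma(x)+(1-x)\psi(x)$ is a nice touch that the paper leaves implicit.
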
 

\begin{proof}
Theorem \ref{prop:diagonal-chaos} yields
\[
\mc{N}\left(|a| X_1 \right) \leq \mc{N}(N) \leq \mc{N}\left(\frac{|a|}{\sqrt{n}} \sum_{i=1}^n X_i \right).
\]
Note that here the lower bound is a simple consequence of the entropy power inequality (this was already mentioned in Section \ref{sec:intro}). Since $\E X_i=\alpha$, we have $P_N = \E|\sum_{i=1}^n a_i(X_i-\alpha)|^2=|a|^2 \E|X_1-\alpha|^2=\alpha |a|^2$. Thus, due to the scaling property $\mc{N}(cX)=c^2 \mc{N}(X)$, we get
\begin{align*}
	\mc{N}\left(\frac{|a|}{\sqrt{n}} \sum_{i=1}^n X_i \right) & = \mc{N}\left(\frac{|a|}{\sqrt{n}} \textrm{Gamma}(n\alpha)  \right) \\
	& = \mc{N}\left(\frac{\sqrt{P_N}}{\sqrt{n \alpha}} \textrm{Gamma}(n\alpha)  \right) \\ & = P_N \mc{N}\left(\frac{\textrm{Gamma}(n\alpha)}{\sqrt{n \alpha}}   \right) = P_N  \tau(n \alpha)^{-1}.
\end{align*}
To finish the proof it suffices to use the Shannon bound \eqref{eq:shannon}. 
\end{proof}

\begin{rem}
Since $\mc{N}(N) \leq \var(N)=P_N$ one always has $C_P(N) \geq \frac12 \ln(1+\frac{P}{P_N})$ with equality for $N$ being Gaussian noise. In the proof of Corollary \ref{cor:capacity} we have used a sharper bound on $\mc{N}(N)$, and thus our result improves upon this trivial bound. It follows that $\tau(x) \geq 1$ for $x \geq 1$. In fact numerical simulations show that $\tau$ is a decreasing function of $x$ on $(0,\infty)$ with limit $1$ as $x \to \infty$. 
\end{rem}

\section{Open Problems}\label{sec:op}

In this section we present some open questions related to our study.

\begin{question}\label{que:1}
In this article we considered only nonnegative quadratic forms. It is natural to ask about an analog of Theorem \ref{thm:main} for general quadratic forms. This corresponds to proving an analog of Theorem \ref{prop:diagonal-chaos}, that is, maximizing $h\left(\sum_{i=1}^n d_i g_i^2\right)$ under the constraint $\sum_{i=1}^n d_i^2=1$, where $g_i$ are independent $\mc{N}(0,1)$ random variables. Numerical simulations show that $d_1=1/\sqrt{2}$ and $d_2=-1/\sqrt{2}$ give the maximum for $n=2$, which suggests that for general $n=2k$ a natural candidate for the maximizer would be $d_1=\ldots=d_n = -d_{n+1} = \ldots = -d_{2n}= 1/\sqrt{2n}$. We do not have any predictions in the odd case.
\end{question} 

\begin{question}
Similarly to Question \ref{que:1}, we can ask about the maximum value of $h(\sum_{i=1}^n d_i X_i)$ under the constraint $\sum_{i=1}^n d_i^2 =1$, where $X_i$ are independent standard exponential random variables, that is, random variables with densities $e^{-x}\1_{[0,\infty)}(x)$. Probably the maximizers are be the same as in Question \ref{que:1}.
\end{question} 
 
\begin{question}[See Question 9 in \cite{ENT18}]\label{que:uniform}

Suppose $U_1, \ldots, U_n$ are independent random variables distributed uniformly in $[-1,1]$. Is it true that $h(\sum_{i=1}^n d_i U_i) \leq h(n^{-1/2} \sum_{i=1}^n U_i)$, whenever $\sum_{i=1}^n d_i^2=1$? 
\end{question} 

\begin{question}
More ambitiously, we can ask similar questions about some larger classes of random variables. For example, suppose that $X_1, \ldots, X_n$ are i.i.d. symmetric log-concave random variables (that is, random variables with densities of the form $e^{-V}$, where $V:\R \to (-\infty,\infty]$ is convex). Is it true that $h(\sum_{i=1}^n d_i X_i)$ is maximized when the coefficients $d_i$ are all equal? The answer to this question is not known even for $n=2$. 
\end{question}

\begin{question}[See Question 12 in \cite{ENT18}]
Among all random variables with a fixed variance the one maximizing the entropy is a Gaussian random variable. Suppose $X_1, X_2$ are i.i.d. and suppose that $G$ is a Gaussian random variable satisfying $\var(X_1)=\var(G)$. Is it always true that $h(X_1+X_2) \leq h(X_1+G)$? Let us mention that this inequality does not hold if, instead of assuming that $X_1, X_2$ are i.i.d., we only assume that they are independent with the same variances.
\end{question}

\begin{question}\label{que:pth-for-gauss}
Let $X_i=g_i^2-1$, where $g_i$ are i.i.d. $\mc{N}(0,1)$ random variables, and let $p>2$. What is the maximum value of $\E\left| \sum_{i=1}^n a_i X_i \right|^p$ under $\sum_{i=1}^n a_i^2 =1$? In other words, what is the maximum possible $p$th central moment of a Gaussian quadratic form under a fixed variance? 
\end{question}

\begin{question}\label{que:renyi-entropy}
For a real random variable $X$ with density $f$ we define its R\'enyi entropy of order $\alpha \in (0,\infty) \setminus \{1\}$ via the expression
\[
	h_\alpha(X) = h_\alpha(f) = \frac{1}{1-\alpha} \ln \left( \int f^\alpha \right).
\]
For $\alpha=1$ we can define $h_1 = \lim_{\alpha \to 0^+} h_\alpha(f)$, which recovers the usual entropy, that is $h_1=h$. For $\alpha=\infty$ another limiting procedure justifies the definition $h_\infty(f) = - \ln \|f\|_\infty$, where $\|\cdot \|_\infty$ denotes the essential supremum of $f$ (we shall also use the notation $M(X)=\|f\|_\infty$). For fixed $\alpha \in (0,\infty]$, what is the maximal/minimal possible R\'enyi entropy of order $\alpha$ for a Gaussian quadratic form of  fixed variance? Equivalently, what is the maximum/minimum of $h_\alpha(\sum_{i=1}^n d_i g_i^2)$ under the constraint $\sum_{i=1}^n d_i^2 =1$? We note that the case $\alpha=\infty$ gives bounds on the so-called concentration function $Q(X; \lambda) = \sup_x \mb{P}(x \leq X \leq x+\lambda)$ for Gaussian quadratic forms, which would be of independent interest in probability theory. The same questions can be asked when $g_i^2$ are replaced with arbitrary i.i.d. gamma random variables $X_i$.
\end{question}

\section{Further motivation and discussion} \label{sec:diss}

\subsection{Relation to convex order} Let $X$ and $Y$ be two real random variables. We say that $X$ is smaller than $Y$ in the \textit{convex order} (denoted $X \prec Y$) if for every convex function $\phi:\R \to \R$ one has $\E \phi(X) \leq \E \phi(Y)$. Marshall and Proschan  in \cite{MP65} observed that if the distribution of the vector (with not necessarily independent components) $(X_1,\ldots, X_n)$ has distribution that is invariant under permuting coordinates, then for any convex function $\Phi:\R^n \to \R$ the function $\Psi(a_1,\ldots, a_n) = \E \Phi(a_1X_1,\ldots, a_n X_n)$ is Schur convex, namely if $a,b \in \R^n$ are such that $a \prec b$, then $\Psi(a)\leq \Psi(b)$. To see this recall (see \cite{MOA11}) that $a \prec b$ if and only if there exist nonnegative numbers $\lambda_\pi$ summing up to $1$, such that $a = \sum_{\pi} \lambda_\pi b_\pi$, where $b_{\pi}= (b_{\pi(1)},\ldots, b_{\pi(n)})$ and $\pi$ is a permutation of $\{1,\ldots, n\}$. Observe that $\Psi$ is convex (as an average of convex functions) and permutation symmetric. We thus have
\[
	\Psi(a) = \Psi\left(\sum_\pi \lambda_\pi b_\pi \right) \leq  \sum_\pi \lambda_\pi \Psi\left( b_\pi \right) = \Psi(b).
\]     
In particular, if $\phi:\R \to \R$ is convex and we define $X_a = \sum_{i=1}^n a_i X_i$, then $a \prec b$ implies $\E \phi(X_a) \leq \E \phi(X_b)$.
Indeed, it suffices to consider $\Phi(x_1,\ldots,x_n)= \phi(x_1+\ldots+x_n)$. We conclude that if $a \prec b$, then $X_a \prec X_b$. 

Using the above theory Yu in \cite{Y08} showed that if $X_1,\ldots, X_n$ are i.i.d. log-concave random variables, then for every $a,b \in \R^n$ with $a \prec b$ one has $h(X_a) \leq h(X_b)$. 
In particular, if $\sum_{i=1}^n a_i=1$ and $a_i \geq 0$ for $i=1,\ldots, n$, then 
\[
h\left( \frac1n \sum_{i=1}^n   X_i \right) \leq   h\left(\sum_{i=1}^n a_i X_i\right) \leq h\left(X_1\right).
\] 
To see this it is enough to recall that sums of independent log-concave random variables are log-concave and use the following general observation: if $X \prec Y$ and $Y$ is log-concave, then  $h(X) \leq h(Y)$. Indeed, if $f_X$ and $f_Y$ are the densities of $X$ and $Y$, then by Lemma \ref{lem:entropy-standard-bound} one has
\begin{align*}
	h(X) & = - \int f_X \ln f_X \leq - \int f_X \ln f_Y  = \E [-\ln f_Y(X) ] \\ &  \leq \E [-\ln f_Y(Y) ] = h(Y),   
\end{align*}
where the last inequality follows from the fact that $-\ln f_Y$ is convex, as $Y$ is log-concave.

The conclusion of the above considerations is that comparing functionals $a \to \E \phi(X_a)$ with $\phi$ convex or $a \to h(X_a)$ (under additional assumption of log-concavity) is an easy task when mean is fixed  (note that $a \prec b$ corresponds to $\E X_a = \E X_b$). For example, if $a \prec b$ then we always have $\E |X_a|^p \leq \E |X_b|^p$ for $p \geq 1$ which, in particular, implies that $\var(X_a) \leq \var(X_b)$. However, Problem \ref{prob:general} is much more delicate, since in this problem, instead of fixing the mean, we fix the variance. In other words, instead of fixing $\sum_{i=1}^n a_i$, we fix $\sum_{i=1}^n a_i^2$. It seems that with this constraint no general statements concerning Schur comparison can be made. For example, in \cite{MNT20} it was shown that even if $X_1,\ldots, X_n$ are i.i.d. symmetric log-concave real random variables, then $a^2 \prec b^2$ (here $a^2=(a_1^2,\ldots, a_n^2)$) does not imply $h(X_a) \geq h(X_b)$, which would be a natural conjecture based on the entropy monotonicity in the Central Limit Theorem. 

However, due to the result from \cite{ENT18}, if $X_1,\ldots, X_n$ are i.i.d. Gaussian mixtures, then $a^2 \prec b^2$ implies $h(X_a) \geq h(X_b)$. Let us recall how convex ordering is applied in this case. Since $X_i$ are Gaussian mixtures, there exist i.i.d. positive random variables $R_i$ and independent $\mc{N}(0,1)$ random variables such that $X_i \sim R_i G_i$. Thus, 
\[
X_a = \sum_{i=1}^n a_i X_i \sim \sum_{i=1}^n a_i R_i G_i \sim \left(\sum_{i=1}^n a_i^2 R_i^2 \right)^{1/2} G_1
\]
and, in particular, $X_a$ is itself a Gaussian mixture.
This is how squares of $a_i$ are introduced in the proof. Let $f_a$ be the density of $X_a$. If we now apply Lemma \ref{lem:entropy-standard-bound}, we see that it is enough to show the inequality $\E[-\ln f_a(X_b)] \leq \E[-\ln f_a(X_a)]$. This can be rewritten as 
\[
\E \phi\left(\sum_{i=1}^n b_i^2 R_i^2 G_1^2 \right) \leq \E \phi\left(\sum_{i=1}^n a_i^2 R_i^2 G_1^2 \right), 
\]
where $\phi(x) = -\ln f_a(\sqrt{x})$.  Since $f_a$ is a density of a Gaussian mixture, it is of the form $f_a(x) = \int_0^\infty e^{-x^2/2t^2} \dd \mu(t)$ for some positive measure $\mu$, and thus $f_a(\sqrt{x}) =\int_0^\infty e^{-x/2t^2} \dd \mu(t)$. This function is clearly log-convex (just apply H\"older's inequality). Thus, $\phi$ is concave and the result follows from the fact that $a^2 \prec b^2$ implies $\sum_{i=1}^n a_i^2 R_i^2 G_1^2  \prec \sum_{i=1}^n b_i^2 R_i^2 G_1^2$ mentioned above (note that here we only need the property that $G_1^2(R_1^2, \ldots, R_n^2)$ has distribution invariant under permutation of coordinates).     

Finally, we mention a very general fact about the comparison of distribution functions of weighted sums of i.i.d. log-concave symmetric random variables due to Proschan, see \cite{P65}: if $X_i$ are i.i.d. symmetric log-concave random variables, then $\mb{P}(X_a \geq t)$ is Schur-convex in $a$, for any fixed $t>0$.    

\subsection{Unique crossing theorem} Consider $X_a = \sum_{i=1}^n a_i X_i$, where $X_i$ are i.i.d. Gamma($\alpha$) random variables.
In \cite{BDHP87} the authors proved Schur-convexity of tails and distribution functions of $X_a$, as functions of $a$, on certain half-lines and intervals, see also \cite{B96} for bounds in the special case of Gaussian quadratic forms. In \cite{DP90} Diaconis and Perlman conjectured that the distribution functions of $X_a$ and $X_b$ cross exactly once when $a\prec b$. This conjecture has been verified by Yu in \cite{Y17} for gamma distributions of shape parameters $\alpha \geq 1$ and disproved for $\alpha<1$. See also \cite{RS15} for some partial results.

What is also worth mentioning is that Sz\'ekely and Bakirov in \cite{SB03} determined the quantities $I_n(x) = \inf_a  \mb{P}(\sum_{i=1}^n a_i g_i^2$ $\leq x)$, where the infimum is taken under the constraint $\sum_{i=1}^n a_i=1$ and $a_i \geq 0$.       
 
\subsection{Khinchine inequalities} Problem \ref{prob:general} is often considered together with its moment counterpart, namely the problem of maximizing the $p$th moment $(\E|S|^p)^{1/p}$ of $S=\sum_{i=1}^n a_i X_i$, for $p>2$, where $X_i$ are i.i.d. symmetric random variables, under a fixed variance. The latter is equivalent to proving the Khintchine-type inequality $(\E|S|^p)^{1/p} \leq C (\E|S|^2)^{1/2}$ with the optimal constant $C$ (depending on $n$ and on the distribution of $X_1$). This has been studied extensively, starting from the case of $X_i$ being symmetric Bernoulli random variables, see the works of Ste\v{c}kin \cite{S61}, Whittle \cite{W60}, and Haagerup \cite{H82}. Later, Latała and Oleszkiewicz solved this problem for random variables distributed uniformly in $[-1,1]$, see \cite{LO}. Averkamp and Houdr\'e in \cite{AH03} settled the case of $X_i$ being  Gaussian mixtures, which was further used in \cite{ENT18} to study the case of $X_i$ being (non-independent) coordinates of a random vector uniformly distributed on $B_q^n=\{(x_1,\ldots, x_n) \in \R^n: \sum_{i=1}^n |x_i|^q \leq 1\}$, for $q \in (0,2]$. The case $q>2$ was treated in \cite{ENT18-2}.  We mention that usually Problem \ref{prob:general} is much harder than its moment-counterpart, the reason being that the $p$th moment is a linear function of the underlying distribution, whereas the entropy is non-linear. For example, the entropy counterpart of the result of Latała and Oleszkiewicz for $X_i$ distributed uniformly in $[-1,1]$ is not known, see Question \ref{que:uniform} in Section \ref{sec:op}. However, surprisingly, the present article solves Problem \ref{prob:general} for $X_i=g_i^2-1$, in which case the moment analog is still open, see  Question \ref{que:pth-for-gauss}. 

The problem of estimating the $p$-th moment of a Gaussian quadratic form and, more generally, of a Gaussian chaos of arbitrary order, has been extensively studied. In particular, it is known that for an arbitrary $n \times n$  matrix $A$ one has, up to absolute constants,  
\[
	\left(\E \left| \scal{A G_n}{G_n} - \E \scal{A G_n}{G_n} \right|^p \right)^{1/p} \sim \sqrt{p} \| A\|_{HS} + p \|A\|,
\]    
where $\|A\|_{HS}$ stands for the Hilbert-Schmidt norm and $\|A\|$ denotes the operator norm. For this result and its extensions to Gaussian chaoses of higher degree see the works \cite{HW71, L06, AW15} of Hanson and Wright, Latała, and Adamczak and Wolff. Unfortunately, once we are not allowed to lose any constants, as in Question \ref{que:pth-for-gauss}, the techniques that lead to the above results cannot easily be adapted. 

\subsection{Hadwiger \& Ball's cube slicing inequalities} In \cite{H72} Hadwiger proved that $|[-\frac12, \frac12]^n \cap a^{\perp}| \geq 1$ for any $a \ne 0$, that is, the $(n-1)$-dimensional central section of the cube has volume at least $1$, with equality for $a=(1,0,\ldots,0)$. Later in his celebrated work \cite{B86-ball} Ball showed that for any $a \ne 0$ one has $|[-\frac12, \frac12]^n \cap a^{\perp}| \leq \sqrt{2}$. One can rephrase these results as follows: if $U_1,\ldots, U_n$ are i.i.d. random variables distributed uniformly in $[-\frac12,\frac12]$ and if $f_a$ denotes the density of $\sum_{i=1}^n a_i U_i$, then  $1 \leq f_a(0)\leq \sqrt{2}$ for $a \in S^{n-1}$. Indeed, one has $f_a(0)=|[-\frac12, \frac12]^n \cap a^{\perp}|$.

The quantity $f_a(0)$ is equal to $\max_{x \in \R} f_a(x)$, due to the log-concavity and symmetry of $f_a$. As $h_\infty(f) = -\ln \|f\|_\infty$, the above inequalities can be rewritten as $-\frac12\ln 2 \leq h_\infty(\sum_{i=1}^n a_i U_i) \leq 0$. We can now see that in Question \ref{que:1} we ask for an analogue of the upper bound for the $h$ functional.          

\subsection{Concentration function of Gaussian quadratic forms}

In the present article we considered entropy of $\sum_{i=1}^n d_i g_i^2$, where $g_i$ are independent standard Gaussian random variables. The same object has recently been considered by Bobkov, Naumov and Ulyanov in \cite{BNU20}, where estimates for its $h_\infty$ functional have been obtained. The authors showed that for positive $d_i$ with $\sum_{i=1}^n d_i^2 =1$ the following bounds on the maximum of the density of $X$ hold
\[
	\frac{c_0}{\sqrt[4]{1-\max_{i} d_i^2}} \leq M\left( \sum_{i=1}^n d_i g_i^2 \right) \leq \frac{c_1}{\sqrt[4]{1-\max_{i} d_i^2}},
\]  
where $c_0>0.013$ and $c_1<1.129$. The difference between this result and bounds discussed in Question \ref{que:pth-for-gauss} is that in the above estimates the dependence on $d_i$ is taken into account (whereas the bounds are tight only up to a universal constant) and in Question \ref{que:renyi-entropy} we ask for  sharp bounds independent of $d_i$, under our usual constraint $\sum_{i=1}^n d_i^2=1$.    

Bounds on $M(\sum_{i=1}^n a_i X_i)$ for general independent random variables $X_i$ are also known. Let us mention here the result of Bobkov and Chirstryakov from \cite{BC14}: if $X_i$ are independent real random variables with  $M(X_i)$ finite, then for all real numbers $a_1, \ldots, a_n$ with $\sum_{i=1}^n a_i^2=1$ one has
\[
	\frac{1}{M^2(\sum_{i=1}^n a_i X_i)} \geq \frac12 \sum_{i=1}^n \frac{1}{M^2(X_i)}.
\] 
Here the constant $\frac12$ is best possible. The proof of this fact combines Balls slicing inequality from \cite{B86-ball} with a result of Rogozin from \cite{R87}.

Let us also mention that development of estimates for concentration functions of sums of independent random variables dates back to the works of Lévy and Kolmogorov, see \cite{L37,K60}. See also the works of Rogozin \cite{R61-1,R61-2}, Kesten \cite{K69}, and Esseen \cite{E68}. 

\subsection{Sums of gamma distributions in applications.}
Sums of independent gamma random variables arise in applied contexts in statistics, actuarial science and engineering. For example, Gaussian quadratic forms occur as the limiting distributions of degree two degenerate $U$-statistics, see \cite{G77,S80,ACF82}, and as limiting distributions of the $\chi^2$ goodness-of-fit statistics, see \cite{CL54, M78}. They also show up naturally in the context of estimating the trace of an $n \times n$ symmetric positive semi-definite matrix $A$ by using the so-called Gaussian estimator $\tr_N(A) = N^{-1}\sum_{i=1}^N G_i^T A G_i$, where $G_i$ are i.i.d. $\mc{N}(0,I_n)$ random vectors, see \cite{RS15}.  

Sums of exponential random variables occur in the form $-\ln(\prod_{i=1}^n U_i^{a_i})$, where $U_i$ are uniform on $(0,1]$ and the quantity $\prod_{i=1}^n U_i^{a_i}$ is a weighted Fisher statistic for combining independent $p$-values $U_1,\ldots, U_n$, see \cite{G55}. They also arise as first-passage-time distributions in certain birth-and-death processes, see \cite{F71,BS87}.  

Sums of independent gamma random variables are used in queuing theory and storage models, see \cite{Pr65}, as well as in the risk of portfolio theory, see \cite{H01}.   

\vspace{0.3cm}

\section{More on the method of intersecting densities}\label{sec:method}

\subsection{General framework.} Suppose $A \subseteq \R$ is connected and $\mc{F}$ is a certain class of functions. Suppose we consider functionals  of the form $\Phi(f) = \int_A gf$ for some function $g$ (here we assume integrability of $fg$ for all $f \in \mc{F}$). Let $g, g_1, g_2, \ldots, g_n$ be certain functions and let $\Phi, \Phi_1,\ldots, \Phi_n$ be corresponding functionals. Suppose our goal is to maximize $\Phi(f)$ under constraints $\Phi_i(f)=m_i$ for all $i=1,\ldots, n$. In other words, our goal is to find the quantity
\[
	M_{\mc{F}}(m_1,\ldots, m_n) = \sup \left\{\Phi(f): \ f \in \mc{F}, \ \Phi_i(f)=m_i, \ i=1, \ldots, n\right\}.
\]
This clearly is a fundamental optimization problem arising in many different contexts. If $\mc{F}$ is a non-linear spaces of function (such as the space of log-concave functions), it is usually hopeless to deal with more than one or two constraints. The method of intersecting densities is a way to overcome these difficulties in certain situations. 

Let us now describe a general framework of our  method. Suppose  that we have two functions $f,f_0$ and we would like that the inequality $\Phi(f) \leq \Phi(f_0)$ holds under constraint of the aforementioned form (for example, $f_0$ is our candidate for the maximizer in the above optimization problem).  Our inequality can be written as $\int_A g(f_0-f) \geq 0$. Usually constraints prevent us from having pointwise estimate $g(f_0-f) \geq 0$, so one has to rewrite the inequality in some way. And here comes our crucial observation: due to the constraints the inequality can equivalently be written as
\[
 	\int_A (f_0-f)\left(g- \sum_{i=1}^n a_i g_i \right) \geq 0,  
\]      
where $a_i$ are arbitrary. Now the idea is to explore our freedom of the choice of $a_i$. Suppose now that the following  conditions hold:

\begin{itemize}
\item[(I)] The function $f_0-f$ changes sign exactly $n$ times.
\item[(II)] The matrix $(g_i(x_j))_{i,j=1}^n$ is invertible for all $x_1<\ldots <x_n$. 
\item[(IIIa)] For any choice of $a_i$ the function $h=g- \sum_{i=1}^n a_i g_i$ changes sign at most $n$ times.
\item[(IIIb)] If $h$ has exactly $n$ zeros, then these zeros are sign change points. 
\end{itemize}

\noindent Then we can make the integrand of constant sign in the following way: take points $x_1<\ldots< x_n$ where $f_0-f$ changes its sign and choose $a_i$ such that $h(x_i)=0$, using solvability of the corresponding system of linear equations guaranteed by (II). Then due to (IIIb) the function $h$ changes its sign in the points $x_i$, where $f_0-f$ changes its sign, and nowhere else according to (IIIa). Thus, the integrand has a constant sign (if it is negative, then $f_0$ is the minimizer, not the maximizer). The sign of the integrand is usually easy to determine by checking it in some concrete point, or in the limit as the argument converges to infinity.     

Let us now focus on one of the following two special cases of moment-type constraints: 
\begin{itemize}
\item[(A)] $A=[0,\infty)$ and $g_i(t)=t^{p_i}$ for some $p_i \in \R$,
\item[(B)] $A =[L,\infty)$ and $g_i(t) =t^{n_i}$, where $n_i$ are non-negative integers and $L \in[-\infty,\infty)$.
\end{itemize}
In both cases (II) is automatically fulfilled. Indeed in case (B) we get the usual Vandermonde determinant whereas for (A) one can use Lemma 22 from \cite{ENT18-2}. Verifying conditions (I) and (III) might not be an easy task and may lead to various issues.

\subsection{Previous development}
In \cite{ENT18-2} in Chapter 4 the technique was used to solve the log-concave moment problem, namely to find log-concave non-increasing probability densities on $[0,\infty)$ maximizing and minimizing the integral $\int_0^\infty t^p f(t) \dd t$ subject to constraints $\int_0^\infty t^{p_i} f(t) \dd t = m_i$, $i=1,\ldots, n$, where $p_1<p_2<\ldots<p_n$. Here $g(t)=t^p$, in which case verifying (III) is easy (see Lemma 19 in \cite{ENT18-2}). Checking (I) is also not a big issue once good candidates $f_0$ for extremizers are found. Thus, the difficulty of this result lies in the conceptual framework related to the inductive scheme rather than in technical issues.  

In Section 3 of \cite{ENT18-2} a simple proof of the following result of Lata{\l}a and Oleszkiewicz from \cite{LO} was given using the technique of intersecting densities: if $p \geq 2$ and $U_i$ are uniform on $[-1,1]$, then $\E|\sum_{i=1}^n a_i U_i|^p$ is a Schur concave function of $(a_1^2,\ldots, a_n^2)$, whereas for $1 \leq p \leq 2$ it is Schur convex. Please note that this result seems very different from the previous one and still the same method can be applied. The proof relies on the following fact (an analogue of Lemma \ref{lem:wyrownywanie-wystarczy} in the present paper): $\E|X_\lambda+Y|^p \leq \E|X_{\lambda'}+Y|^p$ for $0<\lambda<\lambda'<\frac12$ with $X_\lambda = \sqrt{\lambda}U_1+\sqrt{1-\lambda} U_2$, where $Y$ is any unimodal density. The function $g(x)=\E_Y|\sqrt{x}+Y|^p$ turns out to be convex (in fact, as $Y$ unimodal, it is enough to check this for $Y$ being uniform on $[-1,1]$, in which case it is a simple computation). Let $f_\lambda$ be the density of $X_\lambda$. Thus, we want to show that
\[
	\int_0^\infty g(x^2)(f_{\lambda'}(x)-f_{\lambda}(x)) \dd x \geq 0.
\]
This can be rewritten as 
\[
	\int_0^\infty (g(x^2)-(a_1+a_2 x^2))(f_{\lambda'}(x)-f_{\lambda}(x)) \dd x \geq 0.
\]
Now assumption (I) is straightforward to verify and (III) follows from the fact that $g(t)=a_1+a_2t$ has at most two solution due to convexity of $g$. We can now see that this reasoning has a similar structure to the one presented in this article, but the details are different: in the present paper we use the method with $n=3$ constraints instead of just two and the verification of (I) is much more complicated, see Lemma \ref{lem:3-times} (here $U$ plays the role of $X_\lambda$ and $V$ plays the role of $X_{\lambda'}$). Note that also (III) holds  for quite a different reason (in our proof we do not rely on convexity, but on the fact that an equation of the form $\ln x = a_2x^2+a_1x+a_0$ can have at most three solutions).

\subsection{Challenges and obstacles} 
Let us now present two examples, where the verification of assumptions (I) and (III) is not an easy task. This shows that while our method is very general and can be applied to many different problems, the details are usually quite different and it is hard to believe that all these cases can be unified.

\subsubsection{The most Gaussian direction in the cube (Question \ref{que:uniform})} 
Suppose we want to show that the maximum of $h(\sum_{i=1}^n a_i U_i)$ for $U_i$ uniform on $[-1,1]$, under the constraint $\sum_{i=1}^n a_i^2=1$, is given by $a_i=n^{-1/2}$. Then we can follow the strategy of the present paper (see Lemma \ref{lem:entropy-standard-bound}) and then proceed as in the proof of the result of Lata{\l}a and Oleszkiewicz \cite{L06}, in order to show that $\E \Phi_n(\sum_{i=1}^n a_i U_i) \leq \E \Phi_n(n^{-1/2}\sum_{i=1}^n U_i)$, where $\Phi_n= -\ln p_n$, with $p_n$ being the density of $n^{-1/2} \sum_{i=1}^n U_i$, that is, an affine image of the Irwin-Hall distribution. Here verifying (I) is the same as in the proof for moments. However, (III) is now a difficult technical problem. One can check that it would be enough to show that $(\ln p_n(x))'''\leq 0$ for $x>0$. Numerical simulations show that this is indeed true for $n \geq 7$.

\subsubsection{Khinchine inequalities for $c_p e^{-|x|^p}$ densities} 
Suppose $Y_1,\ldots, Y_n$ are i.i.d. random variables with densities of the form $c_pe^{-|x|^p}$ for $p \geq 2$. In this case an analogue of the result of Lata{\l}a and Oleszkiewicz should hold. In fact, checking condition (III) is precisely the same as in the above proof. The only  problem is to verify (I), which is  technically difficult, since the density $f_\lambda$ of $X_\lambda = \sqrt{\lambda} Y_1 + \sqrt{1-\lambda} Y_2$ is now given by a complicated expression and finding the number of sign changes of $f_\lambda - f_{\lambda'}$ turns out to be challenging.

\vspace{0.5cm} 

\paragraph{\textbf{Acknowledgments.}} We would like to thank Tomasz Tkocz and Alexandros Eskenazis for stimulating discussions, which helped us formulate Theorem \ref{prob:general} in its present general form. We are also grateful to the anonymous referees for useful comments.


\begin{thebibliography}{99}

\bibitem{AW15} R. Adamczak and P. Wolff, Concentration inequalities for non-Lipschitz functions with bounded derivatives of higher order, \textit{Probab. Theory Related Fields} \textbf{162}  (2015), no. 3, 531--586. 

\bibitem{ABBN04} S. Artstein, K. M. Ball, F. Barthe, and A. Naor, Solution of Shannon's problem on the monotonicity of entropy,
\textit{J. Amer. Math. Soc.} \textbf{17} (2004), no. 4, 975--982. 

\bibitem{AH03} R. Averkamp and C. Houdr\'e, Wavelet thresholding for non-necessarily Gaussian noise: idealism, \textit{Ann. Statist.} \textbf{31} (2003), 110--151.

\bibitem{B96} N. K. Bakirov, Comparison Theorems for Distribution Functions of Quadratic Forms of Gaussian Vectors, \textit{Theory Probab. Appl.} \textbf{40} 1996, no. 2, 340--348.


\bibitem{ACF82} M. Alvo, P. Cabilio and P. D. Feigin,  Asymptotic theory for measures of concordance with reference to average Kendall tau,  \textit{Ann. Statist.} \textbf{10} (1982), 1269--1276.

\bibitem{B86-ball} K. Ball, Cube slicing in $\R^n$, \textit{Proc. Amer. Math. Soc.} \textbf{97} (1986), no. 3,465--473.

\bibitem{BNT16} K. Ball, P. Nayar, and T. Tkocz, A reverse entropy power inequality for log-concave random vectors, 
\textit{Studia Math.} \textbf{235} (2016), no. 1, 17--30.

\bibitem{B86} A. R. Barron, Entropy and the central limit theorem, \textit{Ann. Probab.} \textbf{14} (1986), 336--342.

\bibitem{BC14} S. G. Bobkov and G. P. Chistyakov, Bounds on the maximum of the density for sums of independent random variables, \textit{Journal of Mathematical Sciences} \textbf{199} (2014), no. 2, 100--106.



\bibitem{BNU20}S. Bobkov, A. Naumov, V. Ulyanov, Two-sided inequalities for the density function’s maximum of weighted sum of chi-square variables, 2020, arXiv:2012.10747.

\bibitem{BDHP87} M. E. Bock, P. Diaconis, F. W. Huffer, and M. D. Perlman, Inequalities for linear combinations of gamma random variables, \textit{Canadian Journal of Statistic} \textbf{15} (1987), 387–-395.

\bibitem{BS87} M. Brown and Y. Shao,  Identifying coefficients in the spectral representation of first passage distributions, \textit{Probab. Eng. and Inform. Sci.} \textbf{1} (1987), 69--74.

\bibitem{CL54} H. Chernoff and E. L. Lehmann,  The use of maximum likelihood estimates in $\chi^2$ goodness of fit, \textit{Ann. Math. Statist.} \textbf{25} (1954), 579--586.

\bibitem{CT06} T. M. Cover and J. A. Thomas, \textit{Elements of information theory}, 2nd ed., John Wiley \& Sons, Inc., Hoboken, New Jersey, 2006.

\bibitem{C62} I. Csisz\'ar, Informationstheoretische Konvergenzbegriffe im Raum der Wahrscheinlichkeitsverteilungen, \textit{Magyar Tud. Akad. Mat. Kutató Int. Közl.} \textbf{7} (1962), 137--158.

\bibitem{DP90} P. Diaconis and M. D. Perlman,  Bounds for tail probabilities of weighted sums of
independent gamma random variables, Topics in statistical dependence (Somerset, PA, 1987), 147–166, IMS Lecture Notes Monogr. Ser., 16, Inst. Math. Statist., Hayward, CA, 1990.

\bibitem{ENT18} A. Eskenazis, P. Nayar, and T. Tkocz, Gaussian mixtures: entropy and geometric inequalities, \textit{Ann. Probab.} \textbf{46} (2018), no. 5, 2908--2945. 

\bibitem{ENT18-2} A. Eskenazis, P. Nayar, T. Tkocz, Sharp comparison of moments and the log-concave moment problem, \textit{Adv.  Math.} \textbf{334} (2018), 389--416.

\bibitem{E68} C.G. Esseen, On the concentration function of a sum of independent random variables, \textit{Z. Wahrscheinlichkeitstheor. und Verw. Geb.}  \textbf{9} (1968), 290--308.

\bibitem{F71} W. Feller, An Introduction to Probability Theory and Its Applications, Volume 2. Second Edition. Wiley, New York, 1971.

\bibitem{G55} I.J. Good, On the weighted combination of significance tests, \textit{J. Roy. Statist.
Soc. B}  \textbf{17} (1955), 264--265.

\bibitem{G77} G. G. Gregory, Large sample theory for U-statistics and tests of fit,  \textit{Ann. Statist.} \textbf{5} (1977), 110--123.

\bibitem{H82} U. Haagerup, The best constants in the Khintchine inequality, \textit{Studia Math.} \textbf{70} (1982), 231--283.

\bibitem{H72} H. Hadwiger, Gitterperiodische Punktmengen und Isoperimetrie, \textit{Monatsh. Math.} \textbf{76} (1972), 410--418.

\bibitem{HW71} D.L. Hanson and F.T. Wright,  A bound on tail probabilities for quadratic forms of
independant random variables, \textit{Ann. Math. Satist.} \textbf{42} (1971), 1079--1083.

\bibitem{H01} W. Hurlimann, Analytical evaluation of economic risk capital for portfolios of gamma risks, \textit{ASTIN Bull.} \textbf{31} (2001), no. 1, 107--122.

\bibitem{K69} H. Kesten, A sharper form of the Doeblin–L\'evy–Kolmogorov–Rogozin inequality for concentration functions, \textit{Math. Scand.}  \textbf{25} (1969),  133--144.

\bibitem{K60} A. Kolmogorov, Sur les propri\'et\'s des fonctions de concentration de M. P. L\'evy, \textit{Ann. Inst. H. Poincaré}  \textbf{16} (1958–1960), 27--34.

\bibitem{K67} S. Kullback, A lower bound for discrimination information in terms of variation, \textit{IEEE Trans. Inform. Theory} \textbf{4} (1967), 126--127.

\bibitem{L37} P. Lévy, Th\'eorie de l'addition des variables al\'eatoires, Gauthier-Villars, 1937.

\bibitem{MNT20} M. Madiman, P. Nayar and T. Tkocz, Two remarks on generalized entropy power inequalities, Geometric Aspects of Functional Analysis: GAFA Israel Seminar (2017-2019), B. Klartag and E. Milman (eds.), Lecture Notes in Mathematics \textbf{2266}, Springer, 2020.

\bibitem{M78} D. S. Moore, Chi-square tests, Studies in Statistics (R. V Hogg, ed.), Math. Assoc. Amer., Washington, 1978.

\bibitem{LO}
R. Lata\l a and K. Oleszkiewicz,
A note on sums of independent uniformly distributed random variables, \textit{Colloq. Math.} \textbf{68} (1995), no. 2, 197--206. 

\bibitem{L06} R. Lata{\l}a, Estimates of moments and tails of Gaussian chaoses, \textit{Ann. Prob.} \textbf{34} (2006), no. 6, 2315--2331.

\bibitem{L59} Ju. V. Linnik, An information theoretic proof of the central limit theorem with Lindeberg conditions, \textit{Theory Probab. Appl.} \textbf{4} (1959), 288--299.

\bibitem{MOA11} A. W. Marshall, I. Olkin, and B. C. Arnold,
\textit{Inequalities: theory of majorization and its applications},
2nd ed., Springer Series in Statistics, Springer, New York, 2011.

\bibitem{MP65} A. W. Marshall and F. Proschan, An inequality for convex functions involving majorization, \textit{J. Math. Anal. Appl.} \textbf{12} (1965), 87--90.

\bibitem{P64} M. S. Pinsker, \textit{Information and information stability of random variables and processes}, Holden-Day, San Francisco, 1964. 

\bibitem{Pr65} N. U. Prabhu, Queues and Inventories, John Wiley \& Sons, Inc., New York, London 1965. 

\bibitem{P65} F. Proschan, Peakedness of distributions of convex combinations, \textit{Ann. Math. Statist.} \textbf{36} (1965), 1703--1706.

\bibitem{R61-1} B. A. Rogozin, An estimate for concentration functions, \textit{Theory Probab. Appl.}  \textbf{6} (1961) 94--96. 

\bibitem{R61-2} B. A. Rogozin, On the increase of dispersion of sums of independent random variables, \textit{Theory Probab. Appl.}  \textbf{6} (1961), 97--99 

\bibitem{R87} B. A. Rogozin, An estimate for the maximum of the convolution for bounded densities, \textit{Teor. Veroyatn.
Primen.} \textbf{32} (1987), 53--61.

\bibitem{RS15} F. Roosta-Khorasani and G. J. Sz\'ekely, Schur properties of convolutions of gamma random variables, \textit{Metrika} \textbf{78} (2015), 997--1014.

\bibitem{S80} R. J. Serfling, Approximation Theorems ofMathematical Statistics, Wiley, New York, 1980.

\bibitem{S48} C. E. Shannon, A mathematical theory of communication, \textit{Bell System Tech. J.} \textbf{27} (1948), 379--423, 623--656. 

\bibitem{S59} A. J. Stam, Some Inequalities Satisfied by the Quantities of Information of Fisher and Shannon, \textit{Information and Control} \textbf{2} (1959), 101--112. 

\bibitem{S61} S. B. Ste\v{c}kin, On the best lacunary system of functions, \textit{Izv. Acad. Nauk. SSSR, Ser. Mat.} \textbf{25} (1961), 357--366.

\bibitem{SB03} G. J. Sz\'ekely and N. K. Bakirov, Extremal probabilities for Gaussian quadratic forms, \textit{Probab. Theory Relat. Fields} \textbf{126} (2003), 184–-202.

\bibitem{W60} P. Whittle, Bounds for the moments of linear and quadratic forms in independent random variables, \textit{Theory Probab. Appl.} \textbf{5} (1960), 302--305. 

\bibitem{Y08} Y. Yu, Letter to the editor: On an inequality of Karlin and Rinott concerning weighted
sums of i.i.d. random variables, \textit{Adv. in Appl. Probab.} \textbf{40} (2008), no. 4, 1223--1226.

\bibitem{Y17} Y. Yu, On the unique crossing conjecture of Diaconis and Perlman on convolutions of gamma random variables, \textit{Ann. Appl. Probab.} \textbf{27} (2017), no. 6, 3893--3910. 

\end{thebibliography}
\end{document}